\theoremstyle{plain}
\newtheorem{thm}{Theorem}[section]
\newtheorem{lem}[thm]{Lemma}
\newtheorem{prop}[thm]{Proposition}
\newtheorem{cor}[thm]{Corollary}
\newtheorem{conj}[thm]{Conjecture}
\theoremstyle{definition}
\newtheorem{defn}[thm]{Definition}
\newtheorem{rem}[thm]{Remark}
\newtheorem{ex}[thm]{Example}
\numberwithin{equation}{section}
\newcommand{\Q}{\mathbb{Q}}
\newcommand{\R}{\mathbb{R}}
\newcommand{\bk}{\mathbf{k}}
\newcommand{\frA}{\mathfrak{A}}
\newcommand{\frh}{\mathfrak{h}}
\newcommand{\frz}{\mathfrak{z}}
\newcommand{\hp}[1]{\overset{#1}{*}}
\newcommand{\odd}{\mathrm{odd}}
\DeclareMathOperator{\Con}{\mathrm{Con}}
\DeclareMathOperator{\Ker}{\mathrm{Ker}}
\author{Shuji Yamamoto}
\address{
JSPS Research Fellow \\ 
Graduate School of Mathematical Sciences\\ 
The University of Tokyo\\ 
3-8-1 Komaba, Meguro, Tokyo, 153-8914 Japan.}
\email{yamashu@ms.u-tokyo.ac.jp}
\thanks{This work was supported by Grant-in-Aid 
for JSPS Fellows 21$\cdot$5093}
\title{Interpolation of multiple zeta and zeta-star values}
\date{}
\keywords{multiple zeta values, multiple zeta-star values}
\subjclass[2000]{Primary 11M32, Secondary 16W99}
\begin{document}

\begin{abstract}
We define polynomials of one variable $t$ whose values at $t=0$ and $1$ are 
the multiple zeta values and the multiple zeta-star values, respectively. 
We give an application to the two-one conjecture of Ohno-Zudilin, 
and also prove the cyclic sum formula for these polynomials. 
\end{abstract}

\maketitle 

\section{Introduction}
For integers $k_1\geq 2$ and $k_2,\ldots,k_n\geq 1$, 
the multiple zeta value and the multiple zeta-star value 
(MZV and MZSV for short) are defined respectively as follows: 
\begin{align*}
\zeta(k_1,\ldots,k_n)&=\sum_{m_1>\cdots>m_n>0}
\frac{1}{m_1^{k_1}\cdots m_n^{k_n}}, \\
\zeta^\star(k_1,\ldots,k_n)&=\sum_{m_1\geq\cdots\geq m_n>0}
\frac{1}{m_1^{k_1}\cdots m_n^{k_n}}. 
\end{align*}
It is easy to see that an MZSV can be expressed as a linear combination 
of MZVs, and vice versa. Indeed, 
\begin{equation}\label{eq:MZSV by MZV}
\zeta^\star(k_1,\ldots,k_n)=\sum_{\mathbf{p}}\zeta(\mathbf{p}), 
\end{equation}
where $\mathbf{p}$ runs over all indices of the form 
\[\mathbf{p}=(k_1\square k_2\square\cdots\square k_n)\]
in which each $\square$ is filled by the comma $,$ or the plus $+$. 
If we denote by $\sigma(\mathbf{p})$ the number of $+$ 
used in $\mathbf{p}$, we also have 
\begin{equation}\label{eq:MZV by MZSV}
\zeta(k_1,\ldots,k_n)=\sum_{\mathbf{p}}
(-1)^{\sigma(\mathbf{p})}\zeta^\star(\mathbf{p}). 
\end{equation}
In this paper, we introduce the polynomial 
\begin{equation}\label{eq:poly}
\zeta^t(k_1,\ldots,k_n)=\sum_{\mathbf{p}}
t^{\sigma(\mathbf{p})}\zeta(\mathbf{p})
\end{equation}
of one variable $t$. 
Note that $\zeta^0(\bk)=\zeta(\bk)$ and $\zeta^1(\bk)=\zeta^\star(\bk)$, 
i.e., this polynomial interpolates MZV and MZSV. 

There are many $\Q$-linear relations among MZVs, 
and some (families of) relations are named after their origin 
or their form. For example, the relation 
\begin{equation}\label{eq:SF MZV}
\sum_{\substack{k_1\geq 2,k_2,\ldots,k_n\geq 1,\\ k_1+\cdots+k_n=k}}
\zeta(\bk)=\zeta(k) \qquad (k>n\geq 1)
\end{equation}
is called the sum formula, and the relation 
\[\begin{split}
&\zeta(a_1+1,\{1\}^{b_1-1},\ldots,a_s+1,\{1\}^{b_s-1})\\
&=\zeta(b_s+1,\{1\}^{a_s-1},\ldots,b_1+1,\{1\}^{a_1-1})
\qquad (a_1,b_1,\ldots,a_s,b_s\geq 1)
\end{split}\]
is called the duality 
(here $\{\ldots\}^l$ denotes the $l$ times repetition of 
the sequence in the curly brackets). 
It is also known that some of these relations have counterparts for MZSVs. 
For instance, the sum formula for MZSVs is 
\begin{equation}\label{eq:SF MZSV}
\sum_{\substack{k_1\geq 2,k_2,\ldots,k_n\geq 1,\\ k_1+\cdots+k_n=k}}
\zeta^\star(\bk)=\binom{k-1}{n-1}\zeta(k)\qquad (k>n\geq 1), 
\end{equation}
while no complete counterpart of the duality has not been obtained 
(see \cite{KO,Li,Yz} and \cite{TY} for some results on this problem). 

We prove that some relations among MZVs and MZSVs can be 
extended to those among polynomials $\zeta^t$. 
An example is the sum formula for $\zeta^t$: 
\begin{thm}\label{thm:SF}
For any integers $k>n\geq 1$, we have 
\begin{equation}\label{eq:SF poly}
\sum_{\substack{k_1\geq 2,k_2,\ldots,k_n\geq 1,\\ k_1+\cdots+k_n=k}}
\zeta^t(\bk)
=\Biggl(\sum_{j=0}^{n-1}\binom{k-1}{j}t^j(1-t)^{n-1-j}\Biggr)\zeta(k). 
\end{equation}
\end{thm}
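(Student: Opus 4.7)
The plan is to compute the left-hand side, call it $S_n(k,t)$, by interchanging orders of summation. Substituting the definition \eqref{eq:poly} and reindexing by the contracted tuple $\mathbf{p}$ gives
\[
S_n(k,t) = \sum_{\mathbf{p}} R_n(\mathbf{p})\, t^{n - m(\mathbf{p})}\, \zeta(\mathbf{p}),
\]
where $\mathbf{p}$ runs over admissible indices of length $m = m(\mathbf{p})$ with first part $\geq 2$ and total weight $k$, and $R_n(\mathbf{p})$ is the number of pairs (composition $\bk$ of length $n$ with $k_1\geq 2$, $k_i\geq 1$; choice of $+$ or $,$ in the $n-1$ gaps) that collapse to $\mathbf{p}$.

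For $\mathbf{p}=(p_1,\ldots,p_m)$, a refinement amounts to choosing $l_i\geq 1$ with $l_1+\cdots+l_m = n$ and then writing each $p_i$ as an ordered sum of $l_i$ positive integers. The constraint $k_1\geq 2$ replaces the usual factor $\binom{p_1-1}{l_1-1}$ by $\binom{p_1-2}{l_1-1}$, and Vandermonde's convolution collapses the sum:
\[
R_n(\mathbf{p}) = \sum_{\substack{l_1+\cdots+l_m = n \\ l_i\geq 1}} \binom{p_1-2}{l_1-1}\prod_{i=2}^{m}\binom{p_i-1}{l_i-1} = \binom{k-m-1}{n-m}.
\]
Since this depends only on $m$, grouping by length and invoking the sum formula \eqref{eq:SF MZV} for each $m$ (valid because $m\leq n<k$) yields, after the change of variable $j = n-m$,
\[
S_n(k,t) = \zeta(k)\sum_{j=0}^{n-1}\binom{k-n+j-1}{j}\, t^j.
\]

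What remains is the polynomial identity
\[
\sum_{j=0}^{n-1}\binom{k-n+j-1}{j}\, t^j = \sum_{j=0}^{n-1}\binom{k-1}{j}\, t^j (1-t)^{n-1-j},
\]
which I would prove by induction on $n$: both sides satisfy the recurrence $F_n(t) = (1-t)\,F_{n-1}(t) + \binom{k-1}{n-1} t^{n-1}$, established on the left via Pascal's rule $\binom{k-n+j}{j} = \binom{k-n+j-1}{j} + \binom{k-n+j-1}{j-1}$ and on the right by extracting the $j=n-1$ term and factoring $(1-t)$ out of the rest. The main obstacle is purely combinatorial: getting the $k_1\geq 2$ shift right in the computation of $R_n(\mathbf{p})$, so that Vandermonde's convolution produces a coefficient depending only on the length $m$ and not on the individual entries $p_i$. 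Once that is in place, the classical sum formula \eqref{eq:SF MZV} and a short induction take care of the rest.
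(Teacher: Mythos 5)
Your proof is correct, but it takes a genuinely different route from the paper. You expand $\zeta^t$ by its definition, interchange summations, and count how many pairs (composition of length $n$ with $k_1\geq 2$, choice of contractions) refine a fixed admissible index $\mathbf{p}$ of length $m$; the shift $\binom{p_1-2}{l_1-1}$ for the first part is handled correctly, Vandermonde's convolution does give $R_n(\mathbf{p})=\binom{k-m-1}{n-m}$ depending only on $m$, and after invoking the classical sum formula \eqref{eq:SF MZV} in each length $m\leq n<k$ your final binomial identity is true (both sides satisfy $F_n(t)=(1-t)F_{n-1}(t)+\binom{k-1}{n-1}t^{n-1}$ with $F_1=1$, exactly as you indicate). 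The paper instead works algebraically: it encodes the left-hand side as $S^t(x_{k,n})$, shows via Corollary \ref{cor:S^t} (iii) that $\frac{d}{dt}S^t(x_{k,n})=(k-n)S^t(x_{k,n-1})$ and that $P_{k,n}$ satisfies the matching derivative recursion, so the elements $S^t(x_{k,n})-P_{k,n}(t)z_k$ span a differential submodule (Lemma \ref{lem:SF}), and then Lemma \ref{lem:DiffSubmod} (Taylor expansion at $t=0$) reduces everything to the classical sum formula. Both arguments ultimately rest on \eqref{eq:SF MZV}, but they buy different things: the paper's differentiation scheme is reused verbatim for the cyclic sum formula and yields, for free, the equivalence of the specialized families at different values of $t$ (e.g.\ the MZV and MZSV sum formulas imply each other), whereas your argument is more elementary, needs none of the $S^t$ operator calculus, and produces as a byproduct the alternative closed form $P_{k,n}(t)=\sum_{j=0}^{n-1}\binom{k-n+j-1}{j}t^j$, whose value $\binom{k-1}{n-1}$ at $t=1$ recovers \eqref{eq:SF MZSV} via the hockey-stick identity.
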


\bigskip 

We study these polynomials from the viewpoint of harmonic algebra, 
the general setup of which was developed in \cite{IKOO}. 
We briefly recall it in Section 2. 
In Section 3, we define an operator $S^t$ which gives 
an algebraic interpretation of \eqref{eq:poly} 
and a variant of the harmonic product compatible with $S^t$, 
and study some basic properties of them. 
Section 4 is devoted to the generalizations of 
the sum formula (Theorem \ref{thm:SF} above) and 
the cyclic sum formula (Theorem \ref{thm:CSF}). 
Finally, in Section 5, we consider the values at $t=\frac{1}{2}$ and 
discuss an application to the two-one conjecture of Ohno-Zudilin \cite{OZ}. 

\subsection*{Acknowledgement} 
The author would like to thank Dr.~Shingo Saito 
for helping to prove Proposition \ref{prop:log S^t}. 

\section{General setup for harmonic algebra}
Here we recall the formulation of harmonic algebra 
introduced in \cite{IKOO}. 

Let $\frA$ be a commutative $\Q$-algebra, 
$\frh^1$ a non-commutative $\frA$-algebra of polynomials of a letter set $A$, 
and $\frz$ the $\frA$-submodule generated by $A$. 

We assume that $\frz$ has a commutative $\frA$-algebra structure, 
not necessarily unitary, with product operation $\circ$ 
called the circle product. 
We define an action of $\frz$ on $\frh^1$ by 
\[a\circ 1=0 \text{ and } a\circ (bw)=(a\circ b)w,\]
where $a,b\in A$ and $w\in\frh^1$ is a word (i.e.\ a monic monomial) 
and by $\frA$-linearity. 

There are two $\frA$-bilinear products $*$ and $\star$ on $\frh^1$ 
defined by 
\begin{gather*}
w*1=1*w=w,\qquad  w\star 1=1\star w=w, \\
(aw)*(bw')=a(w*bw')+b(aw*w')+(a\circ b)(w*w'), \\
(aw)\star(bw')=a(w\star bw')+b(aw\star w')-(a\circ b)(w\star w'),
\end{gather*}
for $a,b\in A$ and any words $w,w'\in\frh^1$. 
With each of these products, $\frh^1$ becomes 
a unitary commutative $\frA$-algebra, 
denoted by $\frh^1_*$ and $\frh^1_\star$ respectively. 

\begin{ex}\label{ex:MZV}
The basic example of the above setting is the case of $\frA=\Q$, 
$A=\{z_k\mid k=1,2,\ldots\}$, and $z_k\circ z_l=z_{k+l}$. 
Then the algebras $\frh^1_*$ and $\frh^1_\star$ give formal expressions 
of the harmonic product of MZVs and MZSVs respectively. 
Namely, set $\frh^0=\frA\oplus\bigoplus_{k=2}^\infty z_k\frh^1$ 
and define $\Q$-linear maps $Z,Z^\star\colon \frh^0\longrightarrow \R$ by 
\begin{align*}
Z\colon 1&\longmapsto 1,\quad 
z_{k_1}\cdots z_{k_n}\longmapsto \zeta(k_1,\ldots,k_n), \\
Z^\star\colon 1&\longmapsto 1,\quad 
z_{k_1}\cdots z_{k_n}\longmapsto \zeta^\star(k_1,\ldots,k_n). 
\end{align*}
Then $\frh^0$ is a $\Q$-subalgebra of $\frh^1$ with respect to 
both products $*$ and $\star$, 
and $Z\colon\frh^0_*\longrightarrow\R$ and 
$Z^\star\colon\frh^0_\star\longrightarrow\R$ are algebra homomorphisms. 
\end{ex}

Another interesting example is the $q$-analogue of the multiple zeta values 
introduced in \cite{Bradley}. 
See \cite[Example 2]{IKOO} for details. 

\section{The interpolating operator and $t$-harmonic product}

\subsection{The operator $S^t$}
\begin{defn}
Let $t$ be an indeterminate, and define an $\frA[t]$-linear operator $S^t$ 
on $\frh^1[t]$ by 
\begin{equation}\label{eq:S^t defn}
S^t(1)=1,\qquad S^t(aw)=aS^t(w)+t\,a\circ S^t(w)
\end{equation}
(where $a\in A$ and $w\in\frh^1$ is a word). 
\end{defn}

Note that, if $\alpha$ is an element of a commutative $\frA$-algebra $\frA'$, 
one naturally obtain an $\frA'$-linear operator $S^\alpha$ 
on $\frA'\otimes_\frA\frh^1$ by substituting $\alpha$ into $t$. 

\begin{ex}
$S^0$ is the identity map on $\frh^1$, 
while $S=S^1$ is just the map considered in \cite{IKOO}, 
which satisfies $Z\circ S=Z^\star$ in the case of Example \ref{ex:MZV}. 
\end{ex}

Here we show some basic properties of $S^t$. 
First, to describe $S^t(w)$ for a word $w$ explicitly, 
we introduce the following notation: 
For an integer $n\geq 0$, denote by $R_n$ the set of subsequences 
$r=(r_0,\ldots,r_s)$ of $(0,\ldots,n)$ such that $r_0=0$ and $r_s=n$. 
For such $r$ and a word $w=a_1\cdots a_n\in\frh^1$, we define the word 
$\Con_r(w)$ (the contraction of $w$ with respect to $r$) by 
\[\Con_r(w)=b_1\cdots b_s,\qquad b_i=a_{r_i+1}\circ\cdots\circ a_{r_{i+1}}. \]
We also put $\sigma(r)=n-s$. Note that $\sigma(r)$ represents 
how many circle products there are in the definition of $\Con_r(w)$. 

\begin{prop}\label{prop:S^t explicit}
For any word $w\in\frh^1$ of length $n$, we have 
\begin{equation*}
S^t(w)=\sum_{r\in R_n}t^{\sigma(r)}\Con_r(w). 
\end{equation*}
\end{prop}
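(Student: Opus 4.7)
The plan is to proceed by induction on the length $n$ of $w$. The base case $n=0$ is immediate: $R_0$ contains only the trivial sequence $(0)$, which has $\sigma=0$ and $\Con_{(0)}(1)=1$, so the right-hand side reduces to $1=S^t(1)$.

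For the inductive step I will write $w=a_1 w'$ with $w'=a_2\cdots a_n$ of length $n-1$, and apply the defining recursion
\[ S^t(w) = a_1\,S^t(w') + t\,a_1\circ S^t(w'). \]
Inserting the inductive formula $S^t(w')=\sum_{r'\in R_{n-1}}t^{\sigma(r')}\Con_{r'}(w')$ gives two sums, which I aim to identify with the restrictions of $\sum_{r\in R_n}t^{\sigma(r)}\Con_r(w)$ to the subsets $\{r\in R_n:r_1=1\}$ and $\{r\in R_n:r_1\geq 2\}$ respectively.

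For the first sum, the assignment $r'=(0,r'_1,\ldots,r'_{s'})\mapsto r=(0,1,r'_1+1,\ldots,r'_{s'}+1)$ is a bijection onto $\{r\in R_n:r_1=1\}$. It increases the number of entries by one and hence preserves $\sigma$, and a direct check from the definition shows $\Con_r(w)=a_1\,\Con_{r'}(w')$. For the second sum, the map $r'\mapsto r=(0,r'_1+1,\ldots,r'_{s'}+1)$ is a bijection onto $\{r\in R_n:r_1\geq 2\}$, leaves the number of entries unchanged so that $\sigma(r)=\sigma(r')+1$ (which absorbs the extra factor of $t$), and satisfies $\Con_r(w)=a_1\circ\Con_{r'}(w')$ by the $\frz$-action rule $a\circ(bw'')=(a\circ b)w''$.

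The only real work, and the step where slips are likely, will be this bookkeeping: checking that the two maps above are well-defined mutual inverses onto the stated pieces of $R_n$ and that $\sigma$ and $\Con_r$ transform as described. Once that correspondence is verified, the two terms in the recursion for $S^t(w)$ reassemble into the full sum over $R_n$, closing the induction.
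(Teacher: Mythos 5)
Your proof is correct and is essentially the paper's own argument spelled out: the paper merely asserts that the explicit sum satisfies the defining recursion \eqref{eq:S^t defn}, which is exactly the splitting of $R_n$ into $\{r_1=1\}$ and $\{r_1\geq 2\}$ that you verify by induction. The only point worth noting is the degenerate case $n=1$ (so $w'=1$), where the second map has empty codomain; this is harmless because the corresponding term $t\,a_1\circ S^t(1)=t\,a_1\circ 1$ vanishes by the rule $a\circ 1=0$.
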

\begin{proof}
It is clear that the above formula determines 
a map $S^t$ satisfying the relation \eqref{eq:S^t defn}. 
\end{proof}

From Proposition \ref{prop:S^t explicit}, we see that 
$(S^t-1)^n(a_1\cdots a_n)=0$ for $n\geq 1$. Therefore, 
\begin{equation}\label{eq:log S^t}
\log S^t=\sum_{k=1}^\infty \frac{(-1)^{k-1}}{k}(S^t-1)^k
\end{equation}
is well-defined as an operator on $\frh^1[t]$. 

\begin{prop}\label{prop:log S^t}
For a word $w$ of length $n$, we have 
\[(\log S^t)(w)=t\sum_{r\in R_n,\,\sigma(r)=1}\Con_r(w)
=t(\log S)(w). \]
\end{prop}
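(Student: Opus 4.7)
The plan is to prove the semigroup identity $S^t \circ S^u = S^{t+u}$, from which the formula for $\log S^t$ will follow by a standard formal power series argument, with the explicit expression coming from Proposition \ref{prop:S^t explicit}.

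For the semigroup step, I would expand both sides using Proposition \ref{prop:S^t explicit}. Identify each $r \in R_n$ with the subset $I \subseteq \{1,\dots,n-1\}$ of indices \emph{not} appearing in $r$, so that $\sigma(r) = |I|$; write $\Con_I(w)$ for $\Con_r(w)$. Then $\Con_I(w)$ is a word of length $n - |I|$ whose $n-|I|-1$ separators are in natural bijection with $\{1,\dots,n-1\} \setminus I$. Using associativity of $\circ$ on $\frz$, one verifies the key identity $\Con_{I'}(\Con_I(w)) = \Con_{I \sqcup I'}(w)$ for disjoint $I, I'$. Applying $S^t$ term-by-term to $S^u(w) = \sum_I u^{|I|}\Con_I(w)$ gives
\[
S^t S^u(w) = \sum_{\substack{I,I'\subseteq\{1,\dots,n-1\}\\I\cap I'=\emptyset}} u^{|I|}t^{|I'|}\Con_{I\sqcup I'}(w) = \sum_{K\subseteq\{1,\dots,n-1\}} \Con_K(w) \sum_{I\subseteq K} u^{|I|}t^{|K|-|I|},
\]
and the binomial theorem reduces the inner sum to $(t+u)^{|K|}$, which is $S^{t+u}(w)$.

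Given the semigroup law, the rest is formal. Since $S^t - \id$ strictly lowers the length of every word, it is locally nilpotent on $\frh^1[t]$, so $\log S^t$ and exponentials of such operators are well-defined polynomial expressions. Differentiating $S^{t+u} = S^t S^u$ in $u$ at $u = 0$ yields $\tfrac{d}{dt}S^t = S^t T$, where $T := \tfrac{d}{du}\big|_{u=0} S^u$ is read off from Proposition \ref{prop:S^t explicit} as $T(w) = \sum_{r \in R_n,\, \sigma(r)=1}\Con_r(w)$. Differentiating symmetrically in $t$ at $t = 0$ shows $T$ and $S^t$ commute, so the unique solution of this ODE with $S^0 = \id$ is $S^t = \exp(tT)$. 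Hence $\log S^t = tT$, and specializing $t=1$ gives $\log S = T$, whence $\log S^t = t \log S$.

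The main obstacle is the combinatorial identity $\Con_{I'}\circ\Con_I = \Con_{I\sqcup I'}$: one has to check that iterating contractions on a word, under the natural identification of separators in $\Con_I(w)$ with $\{1,\dots,n-1\}\setminus I$, really produces a single contraction indexed by the disjoint union, which uses associativity of $\circ$ to collapse nested merges. Once this identity is in place, both the semigroup law and the logarithm formula follow by the computation above.
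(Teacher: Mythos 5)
Your argument is correct, but it runs in a genuinely different direction from the paper's. The paper attacks $\log S^t$ head-on from its defining series: using Proposition \ref{prop:S^t explicit} it writes $(S^t-1)^k(w)=\sum_{r\in R_n}D(\sigma(r),k)\,t^{\sigma(r)}\Con_r(w)$, where $D(m,k)$ counts ordered decompositions of $\{1,\dots,m\}$ into $k$ nonempty blocks, and then kills all terms with $\sigma(r)>1$ via the recurrence $D(m,k)=kD(m-1,k-1)+kD(m-1,k)$ and the resulting identity $\sum_{k=1}^m\frac{(-1)^{k-1}}{k}D(m,k)=\delta_{m,1}$; the semigroup law $S^{t_1+t_2}=S^{t_1}\circ S^{t_2}$ is then deduced afterwards as Corollary \ref{cor:S^t}(ii). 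You invert this logical order: you first prove the semigroup law directly from Proposition \ref{prop:S^t explicit}, via the contraction-composition identity $\Con_{I'}\circ\Con_I=\Con_{I\sqcup I'}$ (which does hold, by associativity of $\circ$ and the identification of the separators of $\Con_I(w)$ with $\{1,\dots,n-1\}\setminus I$) together with the binomial theorem, and then obtain $\log S^t=tT$ by the standard locally nilpotent exponential/ODE formalism, reading off $T$ as the $\sigma(r)=1$ contraction operator. Importantly, you avoid circularity because you do not cite Corollary \ref{cor:S^t}(ii) but reprove it independently. What each approach buys: the paper's computation is elementary and self-contained but hinges on a slightly fiddly alternating-sum identity for the $D(m,k)$; your route replaces that identity by a transparent bijective/binomial argument and delivers the proposition and Corollary \ref{cor:S^t}(i)--(iii) essentially simultaneously, at the cost of having to verify the composition-of-contractions identity carefully and to invoke uniqueness of formal solutions of the linear ODE (unproblematic in characteristic zero with $T$ locally nilpotent).
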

\begin{proof}
The following proof is based on an idea of Shingo Saito. 

By Proposition \ref{prop:S^t explicit}, we can write 
\[(S^t-1)^k(w)=\sum_{r\in R_n}D(\sigma(r),k)t^{\sigma(r)}\Con_r(w), \]
where $D(\sigma(r),k)$ represents the number of all possibilities 
to obtain $\Con_r(w)$ from $w$ by taking non-trivial contractions $k$-times, 
which depends only on $\sigma(r)$ and $k$. In fact, $D(m,k)$ is equal to 
the cardinality of the set 
\[\bigl\{(A_1,\ldots,A_k)\bigm|\{1,\ldots,m\}=A_1\amalg\cdots\amalg A_k, 
A_1,\ldots,A_k\ne\emptyset\bigr\}. \]
By dividing this set into two subsets with respect to whether 
the element $m$ forms a singleton or not, we obtain the recurrence relation 
\[D(m,k)=kD(m-1,k-1)+kD(m-1,k). \]
This relation immediately leads to the formula 
\[\sum_{k=1}^m\frac{(-1)^{k-1}}{k}D(m,k)
=\begin{cases} 1 & (m=1), \\ 0 & (m>1), \end{cases}\]
and then the first equality in the proposition follows. 
The second equality is deduced from the first. 
\end{proof}

\begin{cor}\label{cor:S^t}
\begin{enumerate}
\item We have $S^t=\exp(t\log S)$, where the right hand side 
is defined by the usual infinite series. 
\item We have the equality $S^{t_1+t_2}=S^{t_1}\circ S^{t_2}$ 
as operators on $\frh^1[t_1,t_2]$. 
In particular, $S^{-t}$ is the inverse map of $S^t$. 
\item For any word $w$ of length $n$, we have 
\[\frac{d}{dt}S^t(w)=(S^t\circ\log S)(w)
=\sum_{r\in R_n,\,\sigma(r)=1}S^t(\Con_r(w)). \]
\end{enumerate}
\end{cor}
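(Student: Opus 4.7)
The plan is to deduce all three parts directly from Proposition \ref{prop:log S^t} together with the fact that, on any fixed word, $S^t - 1$ acts nilpotently, so that formal power series in $S^t - 1$ and in $\log S$ terminate and obey the usual exponential/logarithm identities.

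For part (i), I would argue as follows. Fix a word $w$ of length $n$. Since $(S^t - 1)^k$ kills $w$ for $k > n$ (as noted in the paragraph defining $\log S^t$), the series $\log S^t$ applied to $w$ is a finite sum, and similarly $\exp(\log S^t)(w)$ is a finite sum that equals $S^t(w)$ by the standard formal identity $\exp \circ \log = \id$ on $1 + \mathcal{N}$ for a nilpotent $\mathcal{N}$. Now apply Proposition \ref{prop:log S^t}, which gives $\log S^t = t \log S$ as operators on $\frh^1[t]$; substituting yields $S^t = \exp(t \log S)$.

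For part (ii), the same nilpotence (note that $(\log S)^k$ also vanishes on words of length $< k$, since $\log S$ strictly decreases length by at least one) guarantees that $\exp(t_1 \log S)$ and $\exp(t_2 \log S)$ are well defined and that the Cauchy product identity $\exp(A+B) = \exp(A) \circ \exp(B)$ holds whenever $A$ and $B$ commute and act nilpotently on each graded piece. Taking $A = t_1 \log S$ and $B = t_2 \log S$ (which commute trivially), part (i) gives
\[S^{t_1 + t_2} = \exp((t_1+t_2)\log S) = \exp(t_1 \log S) \circ \exp(t_2 \log S) = S^{t_1} \circ S^{t_2}.\]
Setting $t_2 = -t_1$ and using $S^0 = \id$ yields that $S^{-t}$ is the two-sided inverse of $S^t$.

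For part (iii), differentiating $S^t = \exp(t \log S)$ termwise in $t$ (the series is a finite polynomial in $t$ on each graded piece, so this is elementary) gives $\tfrac{d}{dt} S^t = \log S \circ S^t = S^t \circ \log S$, where the two expressions agree because $\log S$ commutes with every power of itself. Finally, Proposition \ref{prop:log S^t} rewrites $(\log S)(w) = \sum_{r \in R_n,\, \sigma(r)=1} \Con_r(w)$, and applying $S^t$ to both sides produces the claimed explicit sum.

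There is no real obstacle: all the content is already packaged in Proposition \ref{prop:log S^t}, and the only thing to be careful about is justifying that we may freely manipulate the exponential and logarithmic series, which follows from the length-grading on $\frh^1$ making $S^t - 1$ and $\log S$ locally nilpotent. The computation for part (iii) is the one place where one should double-check that $\log S$ and $S^t$ commute before identifying $\tfrac{d}{dt} S^t$ with $S^t \circ \log S$ rather than only with $\log S \circ S^t$.
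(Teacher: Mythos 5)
Your proposal is correct and follows exactly the route the paper intends: the paper's own proof of this corollary is just ``Clear from Proposition \ref{prop:log S^t},'' and your argument is a careful spelling-out of that deduction, using the local nilpotence of $S^t-1$ and $\log S$ on the length filtration to justify the formal $\exp$/$\log$ manipulations. No gaps; the commutation check in (iii) you flag is handled correctly since $S^t=\exp(t\log S)$ commutes with $\log S$.
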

\begin{proof}
Clear from Proposition \ref{prop:log S^t}. 
\end{proof}

\subsection{The $t$-harmonic product}
We define an $\frA[t]$-bilinear product $\hp{t}$ on $\frh^1[t]$ 
by the recursive formula 
\begin{gather*}
1*w=w*1=1, \\
\begin{split}
(aw)\hp{t}(bw')&=a(w\hp{t}bw')+b(aw\hp{t}w')
+(1-2t)(a\circ b)(w\hp{t}w')\\
&\quad+(t^2-t)a\circ b\circ (w\hp{t}w') 
\end{split}
\end{gather*}
for $a,b\in A$ and words $w,w'\in\frh^1$. 
We call it the $t$-harmonic product. 
Again, we may substitute any element $\alpha$ of 
a commutative $\frA$-algebra $\frA'$ to obtain 
an $\frA'$-bilinear product $\hp{\alpha}$ on $\frA'\otimes_\frA\frh^1$. 
For example, $\hp{0}=*$ and $\hp{1}=\star$. 

\begin{thm}
The product $\hp{t}$ gives a commutative $\frA[t]$-algebra structure 
on $\frh^1[t]$ and the map $S^t$ is an algebra isomorphism of 
$(\frh^1[t],\hp{t})$ onto $(\frh^1[t],*)$. 
\end{thm}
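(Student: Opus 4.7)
My plan is to establish the homomorphism identity
\[
S^t(w \hp{t} w') = S^t(w) * S^t(w')
\qquad (w,w'\in\frh^1)
\]
by induction on $|w|+|w'|$, and then conclude. Granted this identity, Corollary~\ref{cor:S^t}(ii) provides the inverse $S^{-t}$, so $S^t$ is a bijective unital algebra homomorphism, hence an isomorphism from $(\frh^1[t],\hp{t})$ onto $(\frh^1[t],*)$; associativity, commutativity and unitality for $\hp{t}$ are then transported for free from those of $*$. The $\frA[t]$-bilinearity of $\hp{t}$ is built into its recursive definition.

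\noindent\textbf{Preparation.}
Before running the induction I would record two auxiliary facts, each of which follows by $\frA$-linearity in the leading letter together with the associativity of the circle product: (a) for every $\alpha\in\frz$ and $z\in\frh^1[t]$, one has $S^t(\alpha z)=\alpha S^t(z)+t\,\alpha\circ S^t(z)$ (the defining relation of $S^t$ extended from $A$ to $\frz$); (b) $S^t$ commutes with the left $\frz$-action, i.e.\ $S^t(\alpha\circ z)=\alpha\circ S^t(z)$ for $\alpha\in\frz$.

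\noindent\textbf{Main step.}
For the inductive step I write $w=au$, $w'=bv$, and set $x=S^t(u)$, $y=S^t(v)$. Substituting the recursion of $\hp{t}$ into $S^t((au)\hp{t}(bv))$, pushing $S^t$ through each term via (a) and (b), and applying the induction hypothesis to the three shorter $\hp{t}$-products that appear, I obtain an expression in $x$, $y$ and the letters $a,b$. In parallel,
\[
S^t(au)*S^t(bv)=\bigl(ax+t\,a\circ x\bigr)*\bigl(by+t\,b\circ y\bigr)
\]
is expanded via the recursion for $*$. Collecting by powers of $t$, the $t^0$-parts and the leading-letter contributions $a(\cdots)+b(\cdots)$ cancel, and the matching of the remaining $t^1$- and $t^2$-parts reduces the problem to the two identities
\begin{gather*}
a\bigl(x*(b\circ y)\bigr)+a\circ(x*by)+b\bigl((a\circ x)*y\bigr)+b\circ(ax*y)-2(a\circ b)(x*y)\\
\qquad=ax*(b\circ y)+(a\circ x)*by,\\
a\circ\bigl(x*(b\circ y)\bigr)+b\circ\bigl((a\circ x)*y\bigr)-(a\circ b)\circ(x*y)=(a\circ x)*(b\circ y),
\end{gather*}
which live entirely inside $(\frh^1,*)$. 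Both are checked by decomposing $x=dx'$, $y=ey'$ along their leading letters, expanding once via the recursion for $*$ and the definition of $\circ$, and matching the resulting terms.

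\noindent\textbf{Main obstacle.}
The only real difficulty is the verification of these two combinatorial identities. They are purely formal, but each side unfolds into several summands of three flavors---concatenations of the form $(\text{letter})(\cdots{*}\cdots)$, left actions $(\text{letter})\circ(\cdots{*}\cdots)$, and iterated actions involving terms such as $a\circ b\circ e$---and the $t^1$-identity in particular demands a careful pairing of terms across the two sides. Once these identities are in hand, everything else is formal.
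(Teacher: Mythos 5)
Your plan is correct and essentially coincides with the paper's proof: both reduce the theorem to the identity $S^t(w\hp{t}w')=S^t(w)*S^t(w')$, proved by induction on the total length of $w w'$, and the matching of the $t^1$- and $t^2$-coefficients rests on exactly the two compatibility identities between the $\frz$-action and $*$ that you display, which the paper simply quotes as Lemmas 1 and 2 of \cite{IKOO} instead of re-deriving them by a one-step expansion as you propose (your expansion does work, using linearity of the $*$-recursion in the leading letter and associativity of $\circ$). The only minor omissions are the trivial cases where $w$, $w'$, $x$ or $y$ is the empty word, which should be noted in the induction and in the check of the two auxiliary identities.
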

\begin{proof}
Since $S^t$ is $\frA[t]$-linear and bijective, 
it is sufficient to show 
\begin{equation}\label{eq:S multiplicative}
S^t(w_1\hp{t}w_2)=S^t(w_1)*S^t(w_2)\qquad (w_1,w_2\in \frh^1[t]). 
\end{equation}
This is proved in the same way as \cite[Theorem 1]{IKOO}. 
Namely, we prove \eqref{eq:S multiplicative} for words $w_1,w_2$ 
by induction on the length of $w_1w_2$. 
For $a,b\in A$ and words $w_1,w_2$, putting $W_1=S^t(w_1)$ and $W_2=S^t(w_2)$, 
one can show by direct calculation that 
\begin{align*}
S^t(aw_1\hp{t}bw_2)
=&a\bigl(W_1*(bW_2)\bigr)+ta\bigl(W_1*(b\circ W_2)\bigr)\\
&+ta\circ\bigl(W_1*(bW_2)\bigr)+t^2a\circ\bigl(W_1*(b\circ W_2)\bigr)\\
&+b\bigl((aW_1)*W_2\bigr)+tb\bigl((a\circ W_1)*W_2\bigr)\\
&+tb\circ\bigl((aW_1)*W_2\bigr)+t^2b\bigl((a\circ W_1)*W_2\bigr)\\
&+(1-2t)(a\circ b)(W_1*W_2)-t^2a\circ b\circ(W_1*W_2). 
\end{align*}
On the other hand, Lemma 1 and Lemma 2 of \cite{IKOO} state that 
\begin{align*}
&(a\circ W_1)*(bW_2)\\
&\quad=a\circ\bigl(W_1*(bW_2)\bigr)+b\bigl((a\circ W_1)*W_2\bigr)
-(a\circ b)(W_1*W_2), \\
&(aW_1)*(b\circ W_2)\\
&\quad=b\circ\bigl((aW_1)*W_2\bigr)+a\bigl(W_1*(b\circ W_2)\bigr)
-(a\circ b)(W_1*W_2),\\
&(a\circ W_1)*(b\circ W_2)\\
&\quad=a\circ\bigl(W_1*(b\circ W_2)\bigr)+b\circ\bigl((a\circ W_1)*W_2\bigr)
-(a\circ b)\circ(W_1*W_2). 
\end{align*}
Combining these formulas and the definition 
\[(aW_1)*(bW_2)=a\bigl(W_1*(bW_2)\bigr)+b\bigl((aW_1)*W_2\bigr)
+(a\circ b)(W_1*W_2), \]
we obtain 
\begin{align*}
S^t(aw_1\hp{t}bw_2)&=(aW_1)*(bW_2)+t(a\circ W_1)*(bW_2)\\
&\quad+t(aW_1)*(b\circ W_2)+t^2(a\circ W_1)*(b\circ W_2)\\
&=(aW_1+ta\circ W_1)*(bW_2+tb\circ W_2)\\
&=S^t(aw_1)*S^t(bw_2) 
\end{align*}
as desired. 
\end{proof}

\subsection{Vanishing of an alternating sum}
As an application of the theory we have developed, 
we prove a natural generalization of the formula (\cite[Proposition 6]{IKOO}) 
\begin{equation}\label{eq:IKOO_Prop6}
\sum_{k=0}^n(-1)^k(a_1\cdots a_k)*S(a_n\cdots a_{k+1})=0, 
\end{equation}
for $n\geq 1$ and $a_1,\ldots,a_n\in A$. 

\begin{prop}\label{prop:alt_sum}
For $n\geq 1$ and $a_1,\ldots,a_n\in A$, we have 
\begin{equation}\label{eq:alt_sum}
\sum_{k=0}^n(-1)^kS^t(a_1\cdots a_k)*S^{1-t}(a_n\cdots a_{k+1})=0. 
\end{equation}
\end{prop}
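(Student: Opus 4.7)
The plan is to prove that $F_n(t):=\sum_{k=0}^{n}(-1)^kS^t(a_1\cdots a_k)*S^{1-t}(a_n\cdots a_{k+1})$ vanishes by induction on $n$, via a derivative argument showing $F_n(t)$ is constant in $t$ combined with \eqref{eq:IKOO_Prop6} at $t=0$. Since $S^0=\mathrm{id}$ and $S^{1}=S$, the value $F_n(0)=\sum_{k=0}^{n}(-1)^k(a_1\cdots a_k)*S(a_n\cdots a_{k+1})$ is precisely the left-hand side of \eqref{eq:IKOO_Prop6} and therefore zero. The base case $n=1$ is immediate, since $S^t(a_1)=a_1=S^{1-t}(a_1)$ gives $F_1(t)=1*a_1-a_1*1=0$.

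Differentiating via Corollary~\ref{cor:S^t}(iii) and the chain rule, and writing $u_k=a_1\cdots a_k$, $v_k=a_n\cdots a_{k+1}$, one obtains
\[
\frac{d}{dt}F_n(t)=\sum_{k=0}^{n}(-1)^k\bigl[S^t(\log S(u_k))*S^{1-t}(v_k)-S^t(u_k)*S^{1-t}(\log S(v_k))\bigr].
\]
By Proposition~\ref{prop:log S^t}, $\log S$ applied to a word is the sum over all one-step adjacent contractions. For each $j\in\{1,\ldots,n-1\}$, define the length-$(n-1)$ word $w^{(j)}:=a_1\cdots a_{j-1}(a_j\circ a_{j+1})a_{j+2}\cdots a_n$, obtained by merging the $j$th adjacent pair in $a_1\cdots a_n$. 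A direct check shows that for $k\geq j+1$ the contracted word $a_1\cdots a_{j-1}(a_j\circ a_{j+1})a_{j+2}\cdots a_k$ is precisely the length-$(k-1)$ prefix of $w^{(j)}$, and for $k\leq j-1$ the contracted word $a_n\cdots a_{j+2}(a_j\circ a_{j+1})a_{j-1}\cdots a_{k+1}$ is precisely its length-$(n-k-1)$ reverse suffix.

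Swapping the order of summation to group by $j$ rather than $k$: for each fixed $j$, the contribution from $\log S(u_k)$ (over $k\in\{j+1,\ldots,n\}$) becomes, after the reindexing $l=k-1$, the upper half (indices $l\in\{j,\ldots,n-1\}$) of $F_{n-1}(w^{(j)})$ but with an overall sign flip coming from $(-1)^k=-(-1)^l$; the contribution from $\log S(v_k)$ (over $k\in\{0,\ldots,j-1\}$) directly gives the lower half (indices $k\in\{0,\ldots,j-1\}$) of $F_{n-1}(w^{(j)})$. Subtracting as prescribed by the differentiated expression, these two halves assemble into $-F_{n-1}(w^{(j)})$, so
\[
\frac{d}{dt}F_n(t)=-\sum_{j=1}^{n-1}F_{n-1}(w^{(j)}).
\]
By the inductive hypothesis, extended by $\frA$-multilinearity so that the merged element $a_j\circ a_{j+1}\in\frz$ may play the role of a letter, each $F_{n-1}(w^{(j)})$ vanishes, whence $F_n(t)$ is constant in $t$ and equals $F_n(0)=0$. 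The main obstacle is the bookkeeping in this reorganization: one must verify that the two disjoint ranges $k\geq j+1$ and $k\leq j-1$ dovetail under the reindexing to exhaust all indices of $F_{n-1}(w^{(j)})$, and that the signs align to produce exactly a single $-F_{n-1}(w^{(j)})$ rather than a residual cross term.
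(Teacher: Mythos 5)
Your proposal is correct and follows essentially the same route as the paper: induction on $n$, anchoring at $t=0$ via \eqref{eq:IKOO_Prop6}, differentiating with Corollary \ref{cor:S^t}(iii), and regrouping the resulting double sum by the contraction position $j$ so that each bracket is $-F_{n-1}(w^{(j)})$, which vanishes by the induction hypothesis extended $\frA$-multilinearly to the merged letter $a_j\circ a_{j+1}\in\frz$. The sign and index bookkeeping you flag does work out exactly as you describe, and it is precisely the computation displayed in the paper's proof.
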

\begin{proof}
When $n=1$, \eqref{eq:alt_sum} amounts to the obvious equality 
$S^{1-t}(a)-S^t(a)=a-a=0$. 
Let $n\geq 2$ and assume the formula for $n-1$ holds. 
Since \eqref{eq:alt_sum} holds at $t=0$ by virtue of \eqref{eq:IKOO_Prop6}, 
it suffices to show that the derivative of the left hand side 
with respect to $t$ vanishes. 
By the Leibniz rule, we have 
\begin{align*}
\frac{d}{dt}&\sum_{k=0}^n(-1)^kS^t(a_1\cdots a_k)*S^{1-t}(a_n\cdots a_{k+1})\\
=&\sum_{k=2}^n(-1)^k\sum_{i=1}^{k-1}
S^t\bigl(a_1\cdots(a_i\circ a_{i+1})\cdots a_k\bigr)
*S^{1-t}(a_n\cdots a_{k+1})\\
&-\sum_{k=0}^{n-2}(-1)^k\sum_{i=k+1}^{n-1}
S^t(a_1\cdots a_k)
*S^{1-t}\bigl(a_n\cdots(a_{i+1}\circ a_i)\cdots a_{k+1}\bigr)\\
=&-\sum_{i=1}^{n-1}\Biggl\{
\sum_{k=0}^{i-1}(-1)^k
S^t(a_1\cdots a_k)
*S^{1-t}\bigl(a_n\cdots(a_{i+1}\circ a_i)\cdots a_{k+1}\bigr)\\
&\qquad +\sum_{k=i+1}^n(-1)^{k-1}
S^t\bigl(a_1\cdots(a_i\circ a_{i+1})\cdots a_k\bigr)
*S^{1-t}(a_n\cdots a_{k+1})\Biggr\}. 
\end{align*}
Here the expression inside the bracket vanishes, 
for each $i=1,\ldots,n-1$, by the induction hypothesis applied to the word 
$a_1\cdots(a_i+a_{i+1})\cdots a_n$. This completes the proof. 
\end{proof}

\section{Application to the two-one formula}
In this section, we apply our method to study a conjecture 
proposed in Ohno-Zudilin \cite{OZ}, called the two-one formula. 

Let us be in the setting of Example \ref{ex:MZV}. 
We denote the map $\frh^0[t]\to\R[t]$ obtained from $Z\colon\frh^0\to\R$ 
by tensoring $\Q[t]$ by the same letter $Z$. 
We put $Z^t=Z\circ S^t\colon\frh^0[t]\longrightarrow\R$ and 
$\zeta^t(k_1,\ldots,k_n)=Z^t(z_{k_1}\cdots z_{k_n})$. 
Then $\frh^0[t]$ forms a subalgebra of $(\frh^1[t],\hp{t})$, 
and $Z^t$ is an algebra homomorphism. 

\begin{conj}[Two-one formula]\label{conj:two-one}
For integers $n\geq 1$ and $j_1\geq 1$, $j_2,\ldots,j_n\geq 0$, 
\[\zeta^\star\bigl(\{2\}^{j_1},1,\{2\}^{j_2},1,\ldots,\{2\}^{j_n},1\bigr)
=2^n\zeta^{\frac{1}{2}}(2j_1+1,2j_2+1,\ldots,2j_n+1). \]
\end{conj}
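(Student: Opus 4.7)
The plan is to reformulate Conjecture \ref{conj:two-one} as a single membership statement in $\frh^0[t]$ and to sketch how one might try to establish it. Set
\[ w = z_2^{j_1} z_1\, z_2^{j_2} z_1 \cdots z_2^{j_n} z_1 \in \frh^0, \qquad u = z_{2j_1+1}\, z_{2j_2+1} \cdots z_{2j_n+1} \in \frh^0, \]
and observe that, since $z_{2j_k+1}$ is the iterated circle product of $j_k$ copies of $z_2$ with a single $z_1$, the word $u$ coincides with $\Con_{r_0}(w)$ for the canonical $r_0 \in R_N$ that groups each block $z_2^{j_k} z_1$, where $N = n + \sum_k j_k$.

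By Corollary \ref{cor:S^t}(ii) we have $S = S^{1/2} \circ S^{1/2}$, so that $\zeta^\star(w) = Z(S(w)) = Z^{1/2}(S^{1/2}(w))$ and $2^n \zeta^{1/2}(u) = Z^{1/2}(2^n u)$. Conjecture \ref{conj:two-one} is thus equivalent to the single membership
\[ S^{1/2}(w) - 2^n u \in \Ker Z^{1/2}. \]
I would attempt this by induction on $n$, exploiting the multiplicativity $S^{1/2}(w_1 \hp{1/2} w_2) = S^{1/2}(w_1) * S^{1/2}(w_2)$ to split $w$ at a boundary between two successive blocks $z_2^{j_k} z_1$ and $z_2^{j_{k+1}} z_1$. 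The base case $n = 1$ is the known evaluation $\zeta^\star(\{2\}^{j_1}, 1) = 2\zeta(2j_1 + 1)$ due to Granville, which can also be derived from the sum formula (Theorem \ref{thm:SF}) together with depth-one simplifications.

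The main obstacle is combinatorial: $S^{1/2}(w)$ expands via Proposition \ref{prop:S^t explicit} as $\sum_{r \in R_N} (1/2)^{\sigma(r)} \Con_r(w)$, a sum of $2^{N-1}$ contraction terms, the overwhelming majority of which mix letters across block boundaries and so do not contribute to the single word $u$. Killing these stray terms modulo $\Ker Z^{1/2}$ demands a rich supply of $\Q$-linear relations among $\zeta^{1/2}$-values. The natural candidates are the sum formula (Theorem \ref{thm:SF}) and the cyclic sum formula of Section 4, together with Proposition \ref{prop:alt_sum}, whose statement involves both $S^t$ and $S^{1-t}$ and thus at $t = 1/2$ collapses into a vanishing alternating sum involving only $S^{1/2}$ and $*$. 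Isolating the precise combination of such relations that witnesses $Z^{1/2}(S^{1/2}(w) - 2^n u) = 0$ for every tuple $(j_1, \ldots, j_n)$ is where I expect the genuine difficulty to lie, and is presumably why this conjecture has so far resisted proof despite the clean form the reformulation above gives it.
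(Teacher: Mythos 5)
There is no ``paper's own proof'' to compare against here: the statement is Conjecture \ref{conj:two-one}, which the paper explicitly leaves open. The paper only (a) recalls that Ohno--Zudilin proved the case $n=2$, (b) gives consistency evidence by deriving \eqref{eq:two-one alt2} from Proposition \ref{prop:alt_sum} at $t=\frac12$ and matching it with the known identity \eqref{eq:two-one alt1}, and (c) reformulates the conjecture algebraically via Proposition \ref{prop:two-one RHS} and Conjecture \ref{conj:two-one LHS}. Your proposal does not prove the statement either, and to your credit you say so; but judged as a proof attempt it has a genuine gap, namely the entire inductive step. Your reduction is correct as far as it goes: with $w=z_2^{j_1}z_1\cdots z_2^{j_n}z_1$ and $u=z_{2j_1+1}\cdots z_{2j_n+1}$, the identities $Z^\star=Z\circ S$, $S=S^{1/2}\circ S^{1/2}$ (Corollary \ref{cor:S^t}~(ii)) and $Z^{1/2}=Z\circ S^{1/2}$ do show the conjecture is equivalent to $Z^{1/2}\bigl(S^{1/2}(w)-2^nu\bigr)=0$, and this is essentially the same repackaging the paper performs in Section 4 (your target is what Proposition \ref{prop:two-one RHS} plus Conjecture \ref{conj:two-one LHS} encode). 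But the proposed induction on $n$ is never carried out: the multiplicativity $S^{1/2}(w_1\hp{1/2}w_2)=S^{1/2}(w_1)*S^{1/2}(w_2)$ is only useful if you can express $w$, or $S^{1/2}(w)$ modulo $\Ker Z$, in terms of $\hp{1/2}$-products of the individual blocks $z_2^{j_k}z_1$ together with error terms you can control, and no such decomposition is given. The cross-block contractions in $\sum_{r}(1/2)^{\sigma(r)}\Con_r(w)$ that you identify as ``the main obstacle'' are exactly the open problem, and the relation supply you propose (Theorem \ref{thm:SF}, Theorem \ref{thm:CSF}, Proposition \ref{prop:alt_sum} at $t=\frac12$) is the same evidence-level material the paper already exploits; there is no reason to expect these few families to pin down $Z^{1/2}$ on all the stray terms.

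Two smaller points. First, your base case is fine in substance (depth-one values satisfy $\zeta^{t}(k)=\zeta(k)$, and $\zeta^\star(\{2\}^{j},1)=2\zeta(2j+1)$ is the known $n=1$ case of the two-one formula, from Ohno--Zudilin rather than Granville, whose name is attached to the sum formula). Second, be slightly careful with where your membership statement lives: you need $S^{1/2}(w)\in\Q\otimes\frh^0$, which does hold because every contraction of an admissible word is admissible (circle products only increase the index of the leading letter), but this deserves a sentence. In summary: correct and paper-consistent reformulation, no proof; the conjecture remains exactly as open after your argument as before it.
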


In \cite{OZ}, Ohno and Zudilin proved Conjecture \ref{conj:two-one} for $n=2$, 
and deduced the formula (for $j_1,j_2\geq 1$) 
\[\zeta^\star\bigl(\{2\}^{j_1},1,\{2\}^{j_2},1\bigr)
+\zeta^\star\bigl(\{2\}^{j_2},1,\{2\}^{j_1},1\bigr)\\
=\zeta^\star\bigl(\{2\}^{j_1},1\bigr)\zeta^\star\bigl(\{2\}^{j_2},1\bigr)\]
as a corollary. 
In \cite{TY}, Tasaka and the author obtained a direct proof of this formula 
and its generalization: 

\begin{thm}[{\cite[Theorem 1.2 (i)]{TY}}]
For an integer $n\geq 1$ and non-negative integers $j_1,\ldots,j_n$ 
with $j_1,j_n\geq 1$, we have 
\begin{equation}\label{eq:two-one alt1}
\sum_{k=0}^n(-1)^k
\zeta^\star\bigl(\{2\}^{j_1},1,\ldots,\{2\}^{j_k},1\bigr)
\zeta^\star\bigl(\{2\}^{j_n},1,\ldots,\{2\}^{j_{k+1}},1\bigr)=0. 
\end{equation}
\end{thm}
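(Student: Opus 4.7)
The plan is to recognize \eqref{eq:two-one alt1} as the image under the evaluation $Z$ of the alternating-sum identity furnished by Proposition \ref{prop:alt_sum} at the special value $t = 1/2$, combined with the two-one formula of Conjecture \ref{conj:two-one}. Specializing to the MZV setting of Example \ref{ex:MZV}, I would set $a_i := z_{2j_i+1}$ for $i=1,\ldots,n$. The hypothesis $j_1,j_n\ge 1$ guarantees that $a_1$ and $a_n$ have weight $\ge 3$, and by Proposition \ref{prop:S^t explicit} every word $\Con_r(a_1\cdots a_k)$ or $\Con_r(a_n\cdots a_{k+1})$ appearing in the expansion of $S^{1/2}$ again has leading letter of weight $\ge 3$. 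Hence every summand in the sum to be considered lies in $\frh^0$.

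Next I would apply Proposition \ref{prop:alt_sum} at $t = 1/2$. Since $1 - 1/2 = 1/2$, both $S^t$ and $S^{1-t}$ collapse to $S^{1/2}$, giving
\[\sum_{k=0}^n (-1)^k\, S^{1/2}(a_1\cdots a_k) * S^{1/2}(a_n\cdots a_{k+1}) = 0 \qquad \text{in } \frh^0_*.\]
Applying the $\Q$-algebra homomorphism $Z\colon \frh^0_* \to \R$ term by term, and using $Z\circ S^{1/2} = Z^{1/2}$, yields the numerical identity
\[\sum_{k=0}^n (-1)^k\, \zeta^{1/2}(2j_1+1,\ldots,2j_k+1)\, \zeta^{1/2}(2j_n+1,\ldots,2j_{k+1}+1) = 0.\]

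The final step is to pass from values of $\zeta^{1/2}$ to multiple zeta-star values. According to Conjecture \ref{conj:two-one}, for any admissible subsequence $(i_1,\ldots,i_r)$ one has $\zeta^\star(\{2\}^{j_{i_1}},1,\ldots,\{2\}^{j_{i_r}},1) = 2^r \zeta^{1/2}(2j_{i_1}+1,\ldots,2j_{i_r}+1)$. In every term of the displayed sum the two factors together carry the total power $2^n$, so dividing through by this common factor produces \eqref{eq:two-one alt1}.

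The principal obstacle is precisely this final conversion, since Conjecture \ref{conj:two-one} remains open in general. A fully unconditional proof along these lines would need either to bypass the conjecture by finding a direct harmonic-algebra description of $\zeta^\star(\{2\}^{j_1},1,\ldots,\{2\}^{j_r},1)$ that fits Proposition \ref{prop:alt_sum}, or to set up an induction on $n$ anchored only by the $n = 2$ case of the two-one formula proved in \cite{OZ}. Closing this gap is the main subtlety in the argument.
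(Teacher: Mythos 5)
Your argument as written does not prove the theorem: the crucial last step invokes Conjecture \ref{conj:two-one}, which is open for general $n$ (only the case $n=2$ is proved in \cite{OZ}), and you acknowledge this yourself. A statement labelled as a theorem cannot rest on an open conjecture, so the proposal has a genuine gap rather than being an alternative proof. In fact the paper's logic runs in exactly the opposite direction: the identity \eqref{eq:two-one alt1} is \emph{not} proved here at all --- it is quoted from \cite{TY}, where Tasaka and the author give a direct proof independent of the conjecture. What Proposition \ref{prop:alt_sum} (specialized at $t=\tfrac12$, with $a_i=z_{2j_i+1}$, exactly as you do) yields unconditionally is the companion identity \eqref{eq:two-one alt2} for the values $Z^{1/2}$; the paper then observes that \eqref{eq:two-one alt1} and \eqref{eq:two-one alt2} are precisely what the two-one conjecture would force to agree, and uses this consistency as \emph{evidence for} the conjecture. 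Your proposal inverts this and uses the conjecture to transfer \eqref{eq:two-one alt2} into \eqref{eq:two-one alt1}, which is the one step that cannot currently be justified.

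The unconditional part of your argument is fine: with $j_1,j_n\geq 1$ the leading letters $z_{2j_1+1}$ and $z_{2j_n+1}$ have weight at least $3$, contractions only increase the weight of the leading letter, so all terms lie in $\frh^0$; Proposition \ref{prop:alt_sum} at $t=\tfrac12$ and the homomorphism $Z$ then give
\[\sum_{k=0}^n(-1)^k\,\zeta^{\frac12}(2j_1+1,\ldots,2j_k+1)\,
\zeta^{\frac12}(2j_n+1,\ldots,2j_{k+1}+1)=0,\]
which is \eqref{eq:two-one alt2}. To actually prove \eqref{eq:two-one alt1} you would need a separate argument on the zeta-star side, e.g.\ the direct computation of \cite{TY}; alternatively, establishing Conjecture \ref{conj:two-one LHS} (that the map $X$ is a homomorphism for $\hp{1/2}$ on $\frh^{0,\odd}$) would suffice, but that is again an open problem. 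As it stands, closing the gap you point to is not a ``subtlety'' but the entire content of the theorem.
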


Therefore, assuming Conjecture \ref{conj:two-one}, 
we should also have 
\begin{equation}\label{eq:two-one alt2}
\sum_{k=0}^n(-1)^kZ^{\frac{1}{2}}(z_{2j_1+1}\cdots z_{2j_k+1})
Z^{\frac{1}{2}}(z_{2j_n+1}\cdots z_{2j_{k+1}+1})=0. 
\end{equation}
In fact, this is an immediate consequence of Proposition \ref{prop:alt_sum}. 
The consistency of the identities \eqref{eq:two-one alt1} and 
\eqref{eq:two-one alt2} give an evidence 
for the validity of Conjecture \ref{conj:two-one} for general $n$. 

\bigskip 

Now we investigate the algebraic structure of the right hand side 
of the two-one formula. 

Put $y_j=2z_{2j+1}$ for $j=0,1,2,\ldots$, and 
let $\frh^{1,\odd}$ be the (non-commutative) subalgebra of $\frh^1$ 
generated by all $y_j$. We also put $\frh^{0,\odd}=\frh^{1,\odd}\cap\frh^0$. 

\begin{prop}\label{prop:two-one RHS}
$\frh^{1,\odd}$ forms a subalgebra of $(\frh^1,\hp{1/2})$. 
Its multiplicative structure is determined recursively by 
\[y_iw\hp{1/2}y_jw'
=y_i(w\hp{1/2}y_jw')+y_j(y_iw\hp{1/2}w')-y_{i+j+1}\circ(w\hp{1/2}w'). \]
Moreover, the $\Q$-linear map $\frh^{0,\odd}\longrightarrow\R$ 
defined by 
\[y_{j_1}\cdots y_{j_n}\longmapsto 
2^n\zeta^{\frac{1}{2}}(2j_1+1,\ldots,2j_n+1)\]
is an algebra homomorphism. 
\end{prop}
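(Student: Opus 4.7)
The plan is to verify the recursive formula by expanding $(y_iw)\hp{1/2}(y_jw')$ directly in $\frh^1$ using $y_j = 2z_{2j+1}$; closure of $\frh^{1,\odd}$ under $\hp{1/2}$ then follows by induction on the combined length of $w$ and $w'$, and the homomorphism assertion is immediate from the fact (recalled just before Conjecture~\ref{conj:two-one}) that $Z^{1/2}\colon(\frh^0,\hp{1/2})\to\R$ is an algebra homomorphism.

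First, by the $\frA$-bilinearity of $\hp{1/2}$ and the relation $y_j = 2z_{2j+1}$,
\[(y_iw)\hp{1/2}(y_jw') = 4\bigl((z_{2i+1}w)\hp{1/2}(z_{2j+1}w')\bigr).\]
Applying the defining recursion of $\hp{t}$ at $t = 1/2$, where $1-2t$ vanishes and $t^2-t = -\tfrac14$, the right-hand side expands to
\[4z_{2i+1}(w\hp{1/2}z_{2j+1}w') + 4z_{2j+1}((z_{2i+1}w)\hp{1/2}w') - z_{2i+2j+2}\circ(w\hp{1/2}w').\]
In the first two terms, the factor $4 = 2\cdot 2$ recombines with $2z_{2i+1} = y_i$ on the outside and $2z_{2j+1} = y_j$ inside the $\hp{1/2}$, producing $y_i(w\hp{1/2}y_jw')$ and $y_j((y_iw)\hp{1/2}w')$ respectively.

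The remaining task is to analyze $-z_{2i+2j+2}\circ(w\hp{1/2}w')$ under the inductive assumption that $w\hp{1/2}w'\in\frh^{1,\odd}$. Writing $w\hp{1/2}w' = c_0\cdot 1 + \sum_\alpha c_\alpha y_{m_\alpha}u''_\alpha$, the constant is annihilated by $\circ$, while on each summand one has $y_{m_\alpha}u''_\alpha = 2^{L_\alpha}z_{2m_\alpha+1}\tilde u''_\alpha$ (where $L_\alpha$ is the word length and $\tilde u''_\alpha$ is the $z$-form of $u''_\alpha$), and the action replaces $z_{2m_\alpha+1}$ by $z_{2i+2j+2m_\alpha+3}$, yielding $2^{L_\alpha}z_{2i+2j+2m_\alpha+3}\tilde u''_\alpha = y_{i+j+m_\alpha+1}u''_\alpha$. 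This shows the third term lies in $\frh^{1,\odd}$ (proving closure) and equals $y_{i+j+1}\circ(w\hp{1/2}w')$ once the symbol $\circ$ in the proposition is interpreted as the intrinsic circle product on $\frz' = \frA\cdot\{y_j : j\geq 0\}$ determined by $y_a\circ y_b = y_{a+b}$. This recovers the stated recursion.

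The homomorphism assertion is then automatic: $y_{j_1}\cdots y_{j_n} = 2^nz_{2j_1+1}\cdots z_{2j_n+1}$, so the prescribed map is just $Z^{1/2}$ restricted to $\frh^{0,\odd}$, which is a subalgebra by the previous step. The main technical subtlety is the precise arithmetic cancellation between the $-\tfrac14$ in the $\hp{1/2}$ recursion and the factors of $2$ implicit in the $y_j$'s: although the ambient $\circ$ on $\frz$ sends two odd-indexed letters to an even-indexed one (and hence out of $\frh^{1,\odd}$), these factors conspire so that inside $\frh^{1,\odd}$ the relevant operation collapses to the clean rule $y_a\circ y_b = y_{a+b}$. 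This coincidence is really what underlies the appearance of the factor $2^n$ in Conjecture~\ref{conj:two-one}.
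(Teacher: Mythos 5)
Your proof is correct and takes essentially the same route as the paper, whose own proof is just the remark that both assertions follow directly from the definition of the $\frac{1}{2}$-harmonic product; your expansion of $(y_iw)\hp{1/2}(y_jw')$ and the bookkeeping of the powers of $2$ supply exactly the details the paper leaves implicit. In particular, your reading of the symbol $\circ$ in the recursion as the index-additive product $y_a\circ y_b=y_{a+b}$ on the span of the $y_j$'s is the interpretation that makes the stated formula agree with the directly computed third term $-z_{2i+2j+2}\circ(w\hp{1/2}w')$.
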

\begin{proof}
The first and the second assertions follows directly from 
the definition of the $\frac{1}{2}$-harmonic product. 
\end{proof}

Proposition \ref{prop:two-one RHS} and 
Conjecture \ref{conj:two-one} imply the following conjecture, 
which is interesting in its own right: 

\begin{conj}\label{conj:two-one LHS}
The $\Q$-linear map $X\colon\frh^{0,\odd}\longrightarrow\R$ 
defined by 
\[X(y_{j_1}\cdots y_{j_n})
=\zeta^\star\bigl(\{2\}^{j_1},1,\{2\}^{j_2},1,\ldots,\{2\}^{j_n},1\bigr)\]
is an algebra homomorphism with respect to the $\frac{1}{2}$-harmonic product. 
\end{conj}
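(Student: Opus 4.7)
The plan is to proceed by induction on the total length $|u|+|v|$, matching the recursive formula for $\hp{1/2}$ on $\frh^{1,\odd}$ furnished by Proposition~\ref{prop:two-one RHS} against the standard MZSV stuffle expansion of $X(u)X(v)$.

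For the base case $|u|=|v|=1$ with $u=y_i$ and $v=y_j$ ($i,j\geq 1$), the recursion yields $y_i\hp{1/2}y_j=y_iy_j+y_jy_i$, because the contraction term $y_{i+j+1}\circ 1$ vanishes by the convention $a\circ 1=0$. The desired identity $X(y_i)X(y_j)=X(y_iy_j)+X(y_jy_i)$ then reads
\[
\zeta^\star(\{2\}^i,1)\,\zeta^\star(\{2\}^j,1)
=\zeta^\star(\{2\}^i,1,\{2\}^j,1)+\zeta^\star(\{2\}^j,1,\{2\}^i,1),
\]
which is precisely the $n=2$ case of \cite[Theorem~1.2(i)]{TY} cited above.

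In the inductive step with $u=y_iw$ and $v=y_jw'$, the recursion expresses
\[
X(u\hp{1/2}v)=X\bigl(y_i(w\hp{1/2}y_jw')\bigr)+X\bigl(y_j(y_iw\hp{1/2}w')\bigr)-X\bigl(y_{i+j+1}\circ(w\hp{1/2}w')\bigr),
\]
while the ordinary MZSV stuffle rewrites $X(u)X(v)$ as a sum of $\zeta^\star$-values. I would try to group the stuffle expansion by the pattern of merges between the two index sequences: pure shuffles without any merge should reproduce the pattern-MZSVs arising from the first two terms above, while a merge of the trailing $1$ in a $\{2\}^{i},1$ block with the initial $2$ of the opposite block's next $\{2\}^{j'}$ run should assemble into the contraction term $-X(y_{i+j+1}\circ(w\hp{1/2}w'))$, using the induction hypothesis on $w\hp{1/2}w'$ to fold everything back.

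The central obstacle is that the MZSV stuffle also generates $\zeta^\star$-values whose index words contain letters like $z_3=z_2+z_1$, $z_2=z_1+z_1$, or $z_4=z_2+z_2$ at interior positions; these lie outside the restricted class $\{2\}^{*},1,\ldots,\{2\}^{*},1$ and have no counterpart on the $\hp{1/2}$ side, so for the induction to close this entire family of non-patterned contributions must cancel identically. Exhibiting such a wholesale cancellation appears to require either a generating-function or iterated-integral encoding of $\zeta^\star(\{2\}^{j_1},1,\ldots,\{2\}^{j_n},1)$ that renders the $\hp{1/2}$-stuffle manifest, or else a prior proof of Conjecture~\ref{conj:two-one}, which would identify $X$ with the homomorphism $Y\colon y_{j_1}\cdots y_{j_n}\mapsto 2^n\zeta^{1/2}(2j_1+1,\ldots,2j_n+1)$ of Proposition~\ref{prop:two-one RHS} and reduce the claim to what is already proved there. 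Supplying either ingredient is the hard part, and is presumably the reason the statement is left open here.
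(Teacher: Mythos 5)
You should first note the status of this statement: it is a \emph{conjecture} in the paper, and the paper contains no proof of it. The only justification given there is conditional, namely that Conjecture~\ref{conj:two-one LHS} follows from the two-one formula (Conjecture~\ref{conj:two-one}) combined with Proposition~\ref{prop:two-one RHS}: if $\zeta^\star(\{2\}^{j_1},1,\ldots,\{2\}^{j_n},1)=2^n\zeta^{1/2}(2j_1+1,\ldots,2j_n+1)$, then $X$ coincides with the map $y_{j_1}\cdots y_{j_n}\mapsto 2^n\zeta^{1/2}(2j_1+1,\ldots,2j_n+1)$, which is shown there to be a homomorphism for $\hp{1/2}$. Your proposal identifies exactly this reduction as one of the two possible routes, so in that respect it reproduces the paper's own (conditional) reasoning; and your base case is correct, since $y_i\hp{1/2}y_j=y_iy_j+y_jy_i$ (the contraction term dies on the empty word) and the resulting identity $\zeta^\star(\{2\}^i,1)\zeta^\star(\{2\}^j,1)=\zeta^\star(\{2\}^i,1,\{2\}^j,1)+\zeta^\star(\{2\}^j,1,\{2\}^i,1)$ is the known $n=2$ case from \cite{OZ}, \cite{TY}. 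The only unconditional evidence the paper itself offers in this direction is the consistency check via Proposition~\ref{prop:alt_sum} (the alternating-sum identity \eqref{eq:two-one alt2} matching \eqref{eq:two-one alt1}), which is weaker than the homomorphism property.

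Your diagnosis of why the direct induction does not close is also accurate, and is precisely why the statement is left open: the MZSV stuffle of $X(y_iw)$ and $X(y_jw')$ produces $\zeta^\star$-values whose indices contain entries $3$, $4$, or interior blocks breaking the $\{2\}^{*},1$ pattern, and no mechanism internal to $\frh^{1,\odd}$ accounts for their cancellation against the single contraction term $-y_{i+j+1}\circ(w\hp{1/2}w')$. One further obstacle you should add: the subproducts $w\hp{1/2}w'$ appearing in the recursion generally involve words beginning with $y_0=2z_1$, which lie outside $\frh^{0,\odd}$; there $X$ is not even defined (the corresponding $\zeta^\star(1,\ldots)$ diverges), so any inductive argument must first be set up on a regularized extension of $X$ to $\frh^{1,\odd}$, an issue your sketch does not address. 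So the proposal neither proves the statement nor conflicts with the paper; it is an honest partial analysis whose conclusion --- that the claim currently rests on Conjecture~\ref{conj:two-one} --- is exactly the paper's position.
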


\section{Sum formula and Cyclic sum formula}
In this section, we continue to use the setting of Example \ref{ex:MZV} 
We want to prove the sum formula and the cyclic sum formula 
for the polynomials $\zeta^t(\bk)$. 
In addition, we also prove the equivalence of the families of linear relations 
obtained by substituting various numbers into $t$. 
For the latter purpose, we first make an elementary consideration 
on submodules of $\frh^1[t]$ which are closed under differentiation. 

\subsection{Differential submodules of $\frh^1[t]$}
We call a $\Q[t]$-submodule $N$ of $\frh^1[t]$ a differential submodule 
if it is closed under the differentiation with respect to $t$, i.e., 
it satisfies 
\[f(t)\in N\implies \frac{df}{dt}(t)\in N. \]

\begin{lem}\label{lem:DiffSubmod}
Let $N\subset\frh^1[t]$ be a differential submodule, 
$\frA$ a $\Q$-algebra and $\alpha\in\frA$. 
Then the $\frA[t]$-module $\frA\otimes_\Q N$ is generated 
by the $\frA$-submodule 
\[N_\alpha=\bigl\{f(\alpha)\bigm|f(t)\in \frA\otimes_\Q N\bigr\}
\subset\frA\otimes_\Q\frh^1. \]
\end{lem}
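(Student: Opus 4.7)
The plan is to use the Taylor expansion of an element of $\frA\otimes_\Q N$ around the point $t=\alpha$. Given $f(t)\in\frA\otimes_\Q N$, which is a polynomial in $t$ of some finite degree $d$, I would write
\[
f(t)=\sum_{k=0}^{d}\frac{f^{(k)}(\alpha)}{k!}(t-\alpha)^{k}.
\]
Since $\alpha\in\frA$ and $\frA$ is a $\Q$-algebra, each coefficient $(t-\alpha)^k/k!$ lies in $\frA[t]$, so it remains only to check that every $f^{(k)}(\alpha)$ belongs to $N_\alpha$.

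First I would observe that $\frA\otimes_\Q N$ is itself closed under differentiation with respect to $t$: the differentiation is $\Q$-linear and acts on the second tensor factor, so the assumption that $N$ is a differential submodule transfers immediately to $\frA\otimes_\Q N$. Hence $f^{(k)}(t)\in\frA\otimes_\Q N$ for every $k\geq 0$, and by definition of $N_\alpha$ we get $f^{(k)}(\alpha)\in N_\alpha$. Substituting back into the Taylor expansion expresses $f(t)$ as an $\frA[t]$-linear combination of elements of $N_\alpha$, which is exactly the assertion that $N_\alpha$ generates $\frA\otimes_\Q N$ as an $\frA[t]$-module.

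There is no real obstacle here; the only subtlety is conceptual, namely to notice that the hypothesis of closure under $d/dt$ is precisely what is needed to make each Taylor coefficient $f^{(k)}(\alpha)$ lie in $N_\alpha$, and that no further hypothesis on $\frA$ (such as flatness or characteristic zero beyond the $\Q$-algebra assumption) is required because the factors $1/k!$ are already available in $\Q\subset\frA$.
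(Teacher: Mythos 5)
Your proof is correct and follows essentially the same route as the paper: the author likewise proves the lemma by the Taylor expansion $f(t)=\sum_k \frac{1}{k!}f^{(k)}(\alpha)(t-\alpha)^k$ with the observation that each coefficient lies in $N_\alpha$. Your explicit remark that closure under $d/dt$ transfers to $\frA\otimes_\Q N$ is a detail the paper leaves implicit, but it is the same argument.
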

\begin{proof}
This is obvious from the expansion 
\[f(t)=\sum_{k=0}^na_k(t-\alpha)^k,\qquad 
a_k=\frac{1}{k!}f^{(k)}(\alpha)\in N_\alpha, \]
which holds for any $f(t)\in \frA\otimes_\Q N$. 
\end{proof}

\subsection{Sum formula}
For integers $k>n\geq 1$, put 
\[x_{k,n}=\sum_{\substack{k_1\geq 2,k_2,\ldots,k_n\geq 1,\\ k_1+\cdots+k_n=k}}
z_{k_1}\cdots z_{k_n}\in \frh^1\]
and 
\[P_{k,n}(t)=\sum_{j=0}^{n-1}\binom{k-1}{j}t^j(1-t)^{n-1-j}\in \Q[t].\]
We call the words appearing in $x_{k,n}$ the admissible words of 
weight $k$ and depth $n$. 

\begin{lem}\label{lem:SF}
For an integer $k\geq 2$, 
let $N^{\mathit{SF}}_k\subset\frh^1[t]$ be the $\Q[t]$-submodule 
generated by the elements 
$S^t(x_{k,n})-P_{k,n}(t)z_k$ for all positive integers $n<k$. 
Then $N^{\mathit{SF}}_k$ is a differential submodule. 
\end{lem}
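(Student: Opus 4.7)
The strategy is to show that each generator $g_n(t) = S^t(x_{k,n}) - P_{k,n}(t) z_k$ satisfies the recursion $\frac{d}{dt} g_n(t) = (k-n)\, g_{n-1}(t)$, so that its derivative lies in $N^{\mathit{SF}}_k$ by construction. This reduces the lemma to matching two parallel computations, one on the operator side and one on the polynomial side.

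First I would differentiate $S^t(x_{k,n})$ using Corollary \ref{cor:S^t}(iii). For a word $z_{k_1}\cdots z_{k_n}$, the elements $r\in R_n$ with $\sigma(r)=1$ correspond to choosing a single adjacent pair of letters to merge via $\circ$; this produces an admissible word of depth $n-1$ and weight $k$ (the head remains admissible, since either $k_1\geq 2$ is preserved or the merged letter has index $k_1+k_2\geq 3$). After summing over all admissible depth-$n$ words in $x_{k,n}$ and all merge positions, I would regroup the resulting double sum by the depth-$(n-1)$ word $z_{l_1}\cdots z_{l_{n-1}}$ that appears. Merging at position $i=1$ contributes $l_1-2$ preimages (the constraint $k_1\geq 2$ removes one), while each position $i\geq 2$ contributes $l_i-1$, so the total preimage count is $l_1+\cdots+l_{n-1}-n=k-n$, independent of the word. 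Hence
\[\frac{d}{dt} S^t(x_{k,n}) = (k-n)\, S^t(x_{k,n-1}).\]

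Next I would compute $P_{k,n}'(t)$ by term-by-term differentiation, shifting the summation index in the piece coming from $\frac{d}{dt} t^j$ and combining it with the piece from $\frac{d}{dt}(1-t)^{n-1-j}$. The identity $(j+1)\binom{k-1}{j+1}=(k-1-j)\binom{k-1}{j}$ collapses the two resulting sums into $(k-n)\sum_{j=0}^{n-2}\binom{k-1}{j}t^j(1-t)^{n-2-j}=(k-n)P_{k,n-1}(t)$. Combined with the operator identity, this yields $g_n'(t)=(k-n)\,g_{n-1}(t)$ for $n\geq 2$. Since $P_{k,1}(t)=1$ forces $g_1(t)=0$, the recursion terminates trivially, and $N^{\mathit{SF}}_k$ is closed under $\frac{d}{dt}$.

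The main obstacle is the combinatorial reorganization in the first step: one has to verify that the admissibility constraint at position $i=1$ shaves off exactly the defect needed to make the per-word preimage count $k-n$ independent of $l_1$, so that it matches cleanly the constant $(k-n)$ produced by differentiating $P_{k,n}$. Once this matching is set up, the remainder of the proof is formal.
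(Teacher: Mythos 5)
Your proof is correct and takes essentially the same route as the paper: differentiate $S^t(x_{k,n})$ via Corollary \ref{cor:S^t}(iii) and count the admissible depth-$n$ preimages of each admissible depth-$(n-1)$ word to get the factor $k-n$, then match it against $\frac{d}{dt}P_{k,n}(t)=(k-n)P_{k,n-1}(t)$, so each generator's derivative is a scalar multiple of another generator. (Your recursion $g_n'=(k-n)g_{n-1}$ is in fact the correct form; the paper's displayed identities omit the factor $k-n$, evidently a typo that does not affect the conclusion.)
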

\begin{proof}
One can deduce from Corollary \ref{cor:S^t} (iii) that 
\[\frac{d}{dt}S^t(x_{k,n})=(k-n)S^t(x_{k,n-1}), \]
since for each admissible word $w$ of weight $k$ and depth $n-1$, 
there are exactly $k-n$ admissible words of weight $k$ and depth $n$ 
such that $\Con_r(w')=w$ for some $r\in R_n$ 
(automatically satisfying $\sigma(r)=1$). 
On the other hand, we have 
\begin{align*}
\frac{d}{dt}P_{n,k}(t)
&=\sum_{j=1}^{n-1}j\binom{k-1}{j}t^{j-1}(1-t)^{n-1-j}\\
&\quad -\sum_{j=0}^{n-2}(n-1-j)\binom{k-1}{j}t^j(1-t)^{n-2-j}\\
&=\sum_{j=0}^{n-2}\Biggl\{(j+1)\binom{k-1}{j+1}+(j+1-n)\binom{k-1}{j}\Biggr\}
t^j(1-t)^{n-2-j}. 
\end{align*}
Since 
\begin{align*}
(j+1)\binom{k-1}{j+1}+(j+1-n)\binom{k-1}{j}
&=(j+1)\binom{k}{j+1}-n\binom{k-1}{j}\\
&=(k-n)\binom{k-1}{j}, 
\end{align*}
we obtain 
\[\frac{d}{dt}P_{k,n}(t)=P_{k,n-1}(t). \]
Hence we have 
\[\frac{d}{dt}\Bigl(S^t(x_{k,n})-P_{k,n}(t)z_k\Bigr)
=S^t(x_{k,n-1})-P_{k,n-1}(t)z_k. \]
This completes the proof. 
\end{proof}

\begin{proof}[Proof of Theorem \ref{thm:SF}]
What we have to prove is that the differential submodule 
$N^{\mathit{SF}}_k\subset\frh^1[t]$ defined in Lemma \ref{lem:SF} 
is contained in $\Ker Z$. 
By Lemma \ref{lem:DiffSubmod}, it suffices to show that the $\Q$-submodule 
$(N^{\mathit{SF}}_k)_0=\bigl\{f(0)\bigm|f(t)\in N^{\mathit{SF}}_k\bigr\}$ 
of $\frh^1$ is contained in $\Ker Z$. 
This is, however, exactly what the sum formula \eqref{eq:SF MZV} 
for usual MZVs says. Hence the proof is complete. 
\end{proof}

\begin{rem}
One sees from the above proof that 
the sum formula \eqref{eq:SF MZSV} for MZSVs follows from 
the sum formula \eqref{eq:SF MZV} for MZVs and vice versa. 
This fact was first proved (essentially) by Hoffman \cite{H}. 
\end{rem}

\subsection{Cyclic sum formula}
Let us recall the cyclic sum formulas for MZVs and MZSVs, 
established respectively by Hoffman-Ohno \cite{HO} and 
Ohno-Wakabayashi \cite{OW}: 
\begin{align}
\label{eq:CSF_MZV}
\sum_{l=1}^n\sum_{j=1}^{k_l-1}
\zeta(k_l+1-j&,k_{l+1},\ldots,k_n,k_1,\ldots,k_{l-1},j)\\
\notag
&=\sum_{l=1}^n\zeta(k_l+1,\ldots,k_n,k_1,\ldots,k_{l-1}), \\
\label{eq:CSF_MZSV}
\sum_{l=1}^n\sum_{j=1}^{k_l-1}
\zeta^\star(k_l+1-j&,k_{l+1},\ldots,k_n,k_1,\ldots,k_{l-1},j)\\
\notag
&=k\zeta(k+1). 
\end{align}
Here $k_1,\ldots,k_n\geq 1$ are not all $1$, and $k=k_1+\cdots+k_n$. 

The cyclic sum formula for the polynomials $\zeta^t$ is the following: 

\begin{thm}\label{thm:CSF}
Let $n\geq 1$ and $k_1,\ldots,k_n\geq 1$ be positive integers, 
and assume that $k_1,\ldots,k_n$ are not all $1$. 
Put $k=k_1+\cdots+k_n$. Then 
\begin{equation}\label{eq:CSF}
\begin{split}
&\sum_{l=1}^n\sum_{j=1}^{k_l-1}
\zeta^t(k_l+1-j,k_{l+1},\ldots,k_n,k_1,\ldots,k_{l-1},j)\\
&=(1-t)\sum_{l=1}^n\zeta^t(k_l+1,\ldots,k_n,k_1,\ldots,k_{l-1})
+t^nk\zeta(k+1). 
\end{split}
\end{equation}
\end{thm}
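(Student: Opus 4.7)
The plan is to mirror the argument for the sum formula (Theorem \ref{thm:SF}): encode the difference between the two sides of \eqref{eq:CSF} as a family of elements in $\frh^1[t]$, show the $\Q[t]$-submodule they generate is a differential submodule, and conclude via Lemma \ref{lem:DiffSubmod} after checking the $t=0$ specialization against the classical Hoffman--Ohno cyclic sum formula \eqref{eq:CSF_MZV}.

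Concretely, for each composition $\bk=(k_1,\ldots,k_n)$ with $k_i\geq 1$ not all $1$ and weight $k=k_1+\cdots+k_n$, set
\begin{align*}
L_{\bk}(t) &= \sum_{l=1}^n\sum_{j=1}^{k_l-1} S^t(z_{k_l+1-j}z_{k_{l+1}}\cdots z_{k_{l-1}}z_j), \\
M_{\bk}(t) &= \sum_{l=1}^n S^t(z_{k_l+1}z_{k_{l+1}}\cdots z_{k_{l-1}}),
\end{align*}
and $\Psi_{\bk}(t)=L_{\bk}(t)-(1-t)M_{\bk}(t)-t^nk\,z_{k+1}$. Let $N^{\mathit{CSF}}\subset\frh^1[t]$ be the $\Q[t]$-submodule generated by all such $\Psi_{\bk}(t)$. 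At $t=0$, $\Psi_{\bk}(0)$ is precisely the difference of the two sides of \eqref{eq:CSF_MZV}, so it lies in $\Ker Z$; by Lemma \ref{lem:DiffSubmod} it remains to show $N^{\mathit{CSF}}$ is a differential submodule.

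Using Corollary \ref{cor:S^t}(iii) and the Leibniz rule,
\begin{equation*}
\frac{d}{dt}\Psi_{\bk}(t)=L_{\bk}'(t)+M_{\bk}(t)-(1-t)M_{\bk}'(t)-nt^{n-1}k\,z_{k+1},
\end{equation*}
where $L_{\bk}'$, $M_{\bk}'$ are sums over single-position contractions. For each of the $n$ cyclically adjacent pairs $(k_l,k_{l+1})$ in $\bk$, let $\bk'_l$ denote the depth-$(n-1)$ composition obtained by merging that pair into the single entry $k_l+k_{l+1}\geq 2$; each $\bk'_l$ is again admissible. The target identity $\frac{d}{dt}\Psi_{\bk}(t)=\sum_{l=1}^n\Psi_{\bk'_l}(t)$ then separates, by word-depth, into the two combinatorial identities $L_{\bk}'(t)+M_{\bk}(t)=\sum_l L_{\bk'_l}(t)$ and $M_{\bk}'(t)=\sum_l M_{\bk'_l}(t)$, together with the immediate match $-nt^{n-1}k\,z_{k+1}=\sum_l(-t^{n-1}k\,z_{k+1})$. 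Both identities are verified by classifying each single-position contraction as head, interior, or tail: interior contractions of $L_{\bk}(t)$ reassemble into the non-merged-position summands of the appropriate $L_{\bk'_l}(t)$; head contractions from block $l$ and tail contractions from block $l+1$ together fill the merged-position summands of $L_{\bk'_l}(t)$ for indices $j'\in\{1,\ldots,k_l-1\}\cup\{k_l+1,\ldots,k_l+k_{l+1}-1\}$; and the single ``gap'' at $j'=k_l$ is supplied exactly by the $(l+1)$-th summand of $M_{\bk}(t)$. The $M$-identity is analogous and simpler.

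The main obstacle is this combinatorial bookkeeping, in particular the clean identification of the gap $j'=k_l$ with the $(l+1)$-th cyclic summand of $M_{\bk}(t)$. This identification forces the coefficient $-(1-t)$ of $M_{\bk}$ in $\Psi_{\bk}$, so that the Leibniz derivative contributes $+M_{\bk}$ with precisely the sign needed to fill that gap; it likewise dictates the exponent $n$ in the boundary term $t^nk\,z_{k+1}$ through the multiplicity of cyclic merges. The base case $n=1$ is trivial because $\Psi_{(k_1)}(t)$ turns out to be independent of $t$, so $d/dt$-closure holds vacuously.
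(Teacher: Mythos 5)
Your proposal is correct and follows essentially the same route as the paper: your elements $\Psi_{\bk}(t)$ coincide with the paper's generators $f_w(t)=S^t(\Sigma(w))+(t-1)S^t(C(w))-kt^nz_{k+1}$, and your head/interior/tail contraction bookkeeping (with the gap at $j'=k_l$ filled by $M_{\bk}$) is exactly the computation of Lemma \ref{lem:C_Sigma}, yielding $\frac{d}{dt}\Psi_{\bk}=\sum_l\Psi_{\bk'_l}$ as in Lemma \ref{lem:CSF}, before concluding via Lemma \ref{lem:DiffSubmod} and the $t=0$ case \eqref{eq:CSF_MZV}. The only difference is presentational: the paper organizes the contraction count through the operators $C,\Sigma,\delta$, while you verify the same identity by direct classification of single contractions.
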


\smallskip
We introduce some symbols: 
For any word $w=z_{k_1}\cdots z_{k_n}$ of length $n\geq 1$, we put 
\begin{align*}
C(w)&=\sum_{l=1}^n 
z_{k_l+1}z_{k_{l+1}}\cdots z_{k_n}z_{k_1}\cdots z_{k_{l-1}}, \\
\Sigma(w)&=\sum_{l=1}^n\sum_{j=1}^{k_l-1} 
z_{k_l+1-j}z_{k_{l+1}}\cdots z_{k_n}z_{k_1}\cdots z_{k_{l-1}}z_{j}. 
\intertext{Moreover, if $n\geq 2$, we set (putting $k_{n+1}:=k_1$)}
\delta(w)&=\sum_{l=1}^n z_{k_l+k_{l+1}}
z_{k_{l+2}}\cdots z_{k_n}z_{k_1}\cdots z_{k_{l-1}}. 
\end{align*}
We extend them to $\Q[t]$-linear operators. 

\begin{lem}\label{lem:C_Sigma}
For any word $w$ of length $n\geq 1$, we have 
\begin{align*}
\frac{d}{dt}S^t\bigl(C(w)\bigr)
&=\begin{cases}
0 & (n=1),\\
S^t\bigl(C\delta(w)\bigr) & (n\geq 2),
\end{cases} \\
\frac{d}{dt}S^t\bigl(\Sigma(w)\bigr)
&=\begin{cases}
(k-1)z_{k+1} & (n=1,w=z_k), \\
S^t\bigl(\Sigma\delta(w)\bigr)-S^t\bigl(C(w)\bigr) & (n\geq 2). 
\end{cases}
\end{align*}
\end{lem}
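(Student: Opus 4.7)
The plan is to invoke Corollary \ref{cor:S^t}(iii), which writes $\frac{d}{dt}S^t(v)$ as $S^t$ applied to the sum over all single contractions $\Con_r(v)$ (those with $\sigma(r)=1$) of a word $v$. Since $S^t$ is $\Q[t]$-linear, each assertion of the lemma reduces to a purely combinatorial identity between multisets of contracted words in $\frh^1$. The $n=1$ cases are then immediate: $C(z_k)=z_{k+1}$ is a single letter so admits no contraction, giving derivative $0$, while each of the $k-1$ summands $z_{k+1-j}z_j$ of $\Sigma(z_k)$ has a unique contraction yielding $z_{k+1}$, which is fixed by $S^t$.

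For the $C$-identity with $n\geq 2$, I would write $C(w)=\sum_{l=1}^n w_l$ with $w_l=z_{k_l+1}z_{k_{l+1}}\cdots z_{k_{l-1}}$, and set up a bijection between the contractions $\Con_{(i,i+1)}(w_l)$ and the terms of $C\delta(w)$. The first-pair contraction ($i=1$) combines the incremented first letter with $z_{k_{l+1}}$ and matches the $p=1$ term of $C$ applied to the $l$th summand of $\delta(w)$, while an interior contraction ($2\leq i\leq n-1$) merging $z_{k_{l+i-1}}$ and $z_{k_{l+i}}$ matches the $p=n-i+1$ term of $C$ applied to the $(l+i-1)$st summand of $\delta(w)$. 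This bijection exhausts both sides.

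For the $\Sigma$-identity with $n\geq 2$, each summand $\sigma_{l,j}=z_{k_l+1-j}z_{k_{l+1}}\cdots z_{k_{l-1}}z_j$ has length $n+1$ and so $n$ possible single contractions. I would classify these by position: (A) first-pair, combining $z_{k_l+1-j}$ with $z_{k_{l+1}}$; (B) last-pair, combining $z_{k_{l-1}}$ with the trailing $z_j$; and (C) interior, $2\leq i\leq n-1$. A re-indexing analogous to the $C$-case identifies (C) with precisely the $p\geq 2$ part of $\Sigma\delta(w)$. Types (A) and (B) together should recover the $p=1$ part, but with a subtle defect in the range of the split parameter.

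The main obstacle is pinpointing this defect. Type (A) at $(l,j)$ produces the $\Sigma\delta$-term indexed by $(m=l,\,p=1,\,j'=j)$ for $j\in[1,k_l-1]$; type (B) at $(l,j)$, after relabeling $l\mapsto m+1$, produces the term $(m,\,p=1,\,j'=k_m+j)$ for $j\in[1,k_{m+1}-1]$, i.e.\ $j'\in[k_m+1,k_m+k_{m+1}-1]$. Together they cover $j'\in[1,k_m+k_{m+1}-1]\setminus\{k_m\}$, missing precisely $j'=k_m$. The missed summands evaluate to $\sum_m z_{k_{m+1}+1}z_{k_{m+2}}\cdots z_{k_{m-1}}z_{k_m}$, which coincides with $C(w)$ after the cyclic reindexing $m\mapsto m-1$. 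Summing all three cases therefore gives $\Sigma\delta(w)-C(w)$, and applying $S^t$ completes the proof.
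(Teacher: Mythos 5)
Your proposal is correct and follows essentially the same route as the paper: both apply Corollary \ref{cor:S^t}(iii) to reduce the lemma to a combinatorial identity among single contractions, match the interior and first-pair contractions with the terms of $C\delta(w)$ (resp.\ $\Sigma\delta(w)$) by a cyclic reindexing, and in the $\Sigma$-case observe that the first- and last-pair contractions fill the split range $j'\in[1,k_m+k_{m+1}-1]$ except $j'=k_m$, the missing terms summing to $C(w)$ -- exactly the $j\ne k_r$ exclusion in the paper's computation. Your bookkeeping of the bijection (including the $p=n-i+1$ shift and the relabeling $j'=k_m+j$ for the last-pair case) checks out, so the argument is complete.
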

\begin{proof}
The case $n=1$ is immediate from the definition. 
Let $n\geq 2$ and write $w=z_{k_1}\cdots z_{k_n}$. 
Then, by Corollary \ref{cor:S^t} (iii), we have 
\begin{align*}
\frac{d}{dt}S^t\bigl(C(w)\bigr)
&=S^t\Biggl(z_1\circ\sum_{l=1}^n\sum_{\substack{r=1\\ r\ne l-1}}^n
z_{k_l}\cdots z_{k_r+k_{r+1}}\cdots z_{k_{l-1}}\Biggr)\\
&=S^t\Biggl(z_1\circ\sum_{r=1}^n\sum_{\substack{l=1\\ l\ne r+1}}^n
z_{k_l}\cdots z_{k_r+k_{r+1}}\cdots z_{k_{l-1}}\Biggr)\\
&=S^t\bigl(C\delta(w)\bigr). 
\end{align*}
Similarly, we have $\frac{d}{dt}S^t\bigl(\Sigma(w)\bigr)=S^t(X)$, 
where 
\begin{align*}
X&=z_1\circ\sum_{l=1}^n\sum_{j=1}^{k_l-1}\Biggl\{
\sum_{\substack{r=1\\ r\ne l,l-1}}^n
z_{k_l-j}\cdots z_{k_r+k_{r+1}}\cdots z_{k_{l-1}}z_j\\
&\hspace{7em}+z_{k_l-j+k_{l+1}}\cdots z_{k_{l-1}}z_j
+z_{k_l-j}\cdots z_{k_{l-1}+j}\Biggr\}\\
&=z_1\circ\sum_{r=1}^n\Biggl\{\sum_{\substack{l=1\\ l\ne r,r+1}}^n
\sum_{j=1}^{k_l-1}z_{k_l-j}\cdots z_{k_r+k_{r+1}}\cdots z_{k_{l-1}}z_j\\
&\hspace{5em}+\sum_{\substack{j=1\\ j\ne k_r}}^{k_r+k_{r+1}-1}
z_{k_r+k_{r+1}-j}z_{k_{r+2}}\cdots z_{k_{r-1}}z_j\Biggr\}\\
&=\Sigma\delta(w)-C(w). 
\end{align*}
This completes the proof of the lemma. 
\end{proof}

\begin{lem}\label{lem:CSF}
For an integer $k\geq 2$, let $N^{\mathit{CSF}}_k\subset\frh^1[t]$ be 
the $\Q[t]$-submodule generated by 
\[f_w(t)=S^t\bigl(\Sigma(w)\bigr)+(t-1)S^t\bigl(C(w)\bigr)-kt^nz_{k+1},\]
where $w=z_{k_1}\cdots z_{k_n}$ runs over all words 
such that $n<k$ and $k=k_1+\cdots+k_n$. 
Then $N^{\mathit{CSF}}_k$ is a differential submodule. 
\end{lem}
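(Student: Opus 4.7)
The plan is to differentiate each generator $f_w(t)$ using the Leibniz rule together with the explicit derivative formulas of Lemma~\ref{lem:C_Sigma}, and to recognize the result as a $\Q[t]$-linear combination of other generators of $N^{\mathit{CSF}}_k$. Since $N^{\mathit{CSF}}_k$ is spanned by the $f_w$, this will show that it is closed under $d/dt$.

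Fix $w=z_{k_1}\cdots z_{k_n}$ with $n<k$ and $k_1+\cdots+k_n=k$, and split into two cases. For $n=1$ (so $w=z_k$), I would plug the first clauses of Lemma~\ref{lem:C_Sigma} into the product-rule expansion
\[
\tfrac{d}{dt}f_{z_k}(t)=\tfrac{d}{dt}S^t(\Sigma(z_k))+S^t(C(z_k))+(t-1)\tfrac{d}{dt}S^t(C(z_k))-kz_{k+1},
\]
note $S^t(C(z_k))=z_{k+1}$, and check that the three scalar multiples of $z_{k+1}$ cancel as $(k-1)+1-k=0$, whence $\frac{d}{dt}f_{z_k}(t)=0\in N^{\mathit{CSF}}_k$. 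For $n\geq 2$, the same expansion combined with the second clauses of Lemma~\ref{lem:C_Sigma} collapses the $\pm S^t(C(w))$ pair and yields
\[
\tfrac{d}{dt}f_w(t)=S^t(\Sigma\delta(w))+(t-1)S^t(C\delta(w))-knt^{n-1}z_{k+1}.
\]

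The key step is then to recognize this expression as a sum of generators: by definition $\delta(w)$ is a sum of $n$ words $w_1,\ldots,w_n$, each of length $n-1$ and weight $k$, and the bounds $1\leq n-1<k$ (valid since $n\geq 2$ and $n<k$) guarantee that each $f_{w_l}$ is a bona fide generator of $N^{\mathit{CSF}}_k$. By $\Q[t]$-linearity of $S^t$, $\Sigma$, and $C$ the right-hand side above is precisely $\sum_{l=1}^n f_{w_l}(t)\in N^{\mathit{CSF}}_k$.

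The main obstacle I expect is purely bookkeeping: one must verify that the factor $kn$ coming from $\frac{d}{dt}(-kt^nz_{k+1})=-knt^{n-1}z_{k+1}$ is matched exactly by the $n$ summands of $\sum_l f_{w_l}(t)$, each contributing $-kt^{n-1}z_{k+1}$, and similarly that the scalar cancellation $(k-1)+1-k=0$ in the base case $n=1$ is consistent with this pattern. These numerical coincidences are essentially what force the precise form of the generators $f_w$; once they are verified, the inductive identity $\frac{d}{dt}f_w=\sum_l f_{w_l}$ drops out of Lemma~\ref{lem:C_Sigma} with no further effort.
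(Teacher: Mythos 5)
Your proposal is correct and follows essentially the same route as the paper: the paper also differentiates $f_w(t)$ via the Leibniz rule and Lemma \ref{lem:C_Sigma}, obtaining $\frac{d}{dt}f_w=0$ for $n=1$ and $\frac{d}{dt}f_w=f_{\delta w}$ for $n\geq 2$ (which, after extending $f$ linearly in $w$, is exactly your $\sum_l f_{w_l}$, the count $kn t^{n-1}$ matching the $n$ summands as you note). Your check that the words in $\delta(w)$ have length $n-1<k$ and weight $k$, hence are genuine generators, is the same implicit verification.
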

\begin{proof}
We extend $f_w(t)$ by linearity on $w$, e.g., 
$f_{w_1+w_2}(t)=f_{w_1}(t)+f_{w_2}(t)$. 
Then Lemma \ref{lem:C_Sigma} implies that 
\[\frac{d}{dt}f_w(t)=\begin{cases}
0 & (n=1),\\
f_{\delta w}(t) & (n\geq 2). 
\end{cases}\]
Thus we obtain the lemma. 
\end{proof}

\begin{proof}[Proof of Theorem \ref{thm:CSF}]
Now the proof proceeds in the same way as the proof of Theorem \ref{thm:SF}. 
Namely, by Lemma \ref{lem:CSF} and Lemma \ref{lem:DiffSubmod}, 
it is sufficient to show that 
$(N^{\mathit{CSF}}_k)_0=\bigl\{f(0)\bigm|f(t)\in N^{\mathit{CSF}}_k\bigr\}$ 
of $\frh^1$ is contained in $\Ker Z$, 
and this is the content of the cyclic sum formula \eqref{eq:CSF_MZV} 
for usual MZVs. 
\end{proof}

\begin{rem}
The above proof implies that 
the formulas \eqref{eq:SF MZV} and \eqref{eq:SF MZSV} are equivalent, 
as proved in \cite{IKOO} and \cite{TW}. 
\end{rem}

\end{document}